\DeclareMathOperator{\Aut}{Aut}
\DeclareMathOperator{\Out}{Out}
\DeclareMathOperator{\Epi}{Epi}
\DeclareMathOperator{\PSL}{PSL}
\DeclareMathOperator{\SL}{SL}
\DeclareMathOperator{\Inn}{Inn}
\DeclareMathOperator{\Sym}{Sym}
\DeclareMathOperator{\Hom}{Hom}
\DeclareMathOperator{\Mod}{Mod}
\newcommand{\g}{\underline{g}}
\newcommand{\x}{\underline{x}}
\newcommand{\h}{\underline{h}}
\newcommand{\trivgp}{\langle e \rangle}
\newcommand{\sgg}{\left \langle \underline{g} \right \rangle}
\newcommand{\id}{{\bf{Id}}}
\def\cprime{$'$}
\newtheorem{theorem}{Theorem}
\newtheorem{lemma}[theorem]{Lemma}
\newtheorem{proposition}[theorem]{Proposition}
\newtheorem{corollary}[theorem]{Corollary}
\theoremstyle{definition}
\newtheorem{question}[theorem]{Question}
\newtheorem{definition}[theorem]{Definition}
\newtheorem{note}[theorem]{Note}
\newtheorem{remark}[theorem]{Remark}
\newcommand{\N}{{\mathbf{N}}}
\newcommand{\Z}{{\mathbf{Z}}}
\newcommand{\F}{{\mathbf{F}}}
\newcommand{\arrow}{\rightarrow}
\title{Highly transitive actions of ${\mathbf{\operatorname{Out}(F_n)}}$}
\author{Shelly Garion}
\thanks{The first author was supported by a European Postdoctoral Fellowship (EPDI) }
\email{shellyg@ihes.fr}
\address{Institut des Hautes {\'E}tudes Scientifiques, route de Chartres, 91440 Bures-sur-Yvette, France.}
\author{Yair Glasner}
\thanks{The second author was partially supported by ISF grant 888/07. }
\email{yairgl@math.bgu.ac.il}
\address{Department of Mathematics, Ben-Gurion University of The Negev, Be'er Sheva, Israel.}
\subjclass[2000]{Primary 20F28; Secondary 20E05, 20F05}
\begin{document}
\bibliographystyle{alpha}

\begin{abstract}
An action of a group on a set is called $k${\it{-transitive}} if it is transitive on ordered $k$-tuples and {\it{highly transitive}} if it is $k$-transitive for every $k$. We show that for $n \ge 4$ the group $\Out(F_n) = \Aut(F_n) / \Inn(F_n)$ admits a faithful highly transitive action on a countable set.
%
\end{abstract}

\maketitle

\section{introduction}
\subsection{Highly transitive actions. }
The group $\Out(F_n) = \Aut(F_n)/\Inn(F_n)$ of outer automorphisms of the free group attracted much attention in the last couple of decades. The theory that is developed around this group runs parallel to that of the mapping class group of a surface $\Mod(\Sigma_g) = \Out(\pi_1(\Sigma_g))$ and the special linear group $\SL_n(\Z)  = \Out(\Z^n)$.  The questions that are asked about the first two groups are often motivated by the more classical theory of the arithmetic group $\SL_n(\Z)$ but sometimes the answers exhibit new and interesting phenomena. In the lowest non-trivial case these three families  coincide $\Out(F_2) = \Mod(\Sigma_1) = \SL_2(\Z)$ and  then they ramify in different directions. Moreover, for large values of $n$ all three theories exhibit interesting ``higher rank'' phenomena that are not shared by the group $\SL_2(\Z)$. 

Due to the efforts of many mathematicians, notably Margulis, we can exhibit today an intricate and beautiful structure theory for the arithmetic groups $\SL_n(\Z)$, for $n \ge 3$. This theory provides a complete understanding of many properties of these groups, including their normal subgroups, finite index subgroups and finite dimensional linear representations. More generally it is safe to say that we have a complete understanding of representations of $\SL_n(\Z)$ into locally compact groups, summarizing all of the above. In particular it was shown by Venkataramana \cite{Venkataramana:LC_completions} and upcoming paper of Willis-Shalom that every representation of $\rho: \SL_n(\Z) \arrow G$ into a locally compact group has either a pre-compact or a discrete image. 

There is a lot of information pertaining to representations of $\SL_n(\Z)$ into polish groups that are not locally compact; including unitary representation (see for example \cite{BHP:Kazhdan_T_book}), action on manifolds (see \cite{Zimmer_Morris:book}) and infinite permutation representations. In this paper we are interested in the latter theory, namely representations of a group $\Gamma$ into  $\Sym(\Z)$ - the full (polish) group of permutations of $\Z$, considered here as a countable set. From a different perspective this is also the study of subgroups of $\Gamma$, via the well known correspondence between transitive permutation representations and (conjugacy classes of) subgroups.  At the moment we have a very limited understanding of permutation representations of $\SL_n(\Z)$. Primitive permutation representations for $\SL_n(\Z)$ and many other finitely generated linear groups, were constructed by Margulis and Soifer \cite{MS:first,MS:Maximal_Subgroups}.  This work was later extended in \cite{GG:Primitive} to construct {\it{faithful}} representations of many countable linear groups that are not necessarily finitely generated. From the point of view of permutation representations however, primitivity is a rather weak notion, for example a primitive permutation representation $\rho: \Gamma \arrow \Sym(\Z)$ might still have a discrete image. In particular it is an open question whether the analogue of the Venkataramana and Willis-Shalom theorem is valid in this setting, namely if there exists permutation representations $\rho: \SL_n(\Z) \arrow \Sym(\Z)$ whose image is neither discrete nor pre-compact. 

In this paper we solve the above mentioned question for the group $\Gamma = \Out(F_n)$ for every $n \ge 4$, by exhibiting permutation representations that are as far from being discrete as possible:
\begin{theorem}
For every $n \ge 4$ there exists a faithful permutation representation $\rho: \Out(F_n) \arrow \Sym(\Z)$ with a dense image. Or in other words a permutation representation that is $k$-transitive for every $k$. 
\end{theorem}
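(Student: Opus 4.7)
The plan is to build the highly transitive action from natural $\Out(F_n)$-actions on sets of epimorphisms to finite groups. For any group $G$, the set $\Epi(F_n, G)$ of surjective homomorphisms $F_n \arrow G$ carries commuting actions of $\Aut(F_n)$ by precomposition and $\Aut(G)$ by postcomposition, so $\Aut(F_n)$ acts on $\Epi(F_n,G)/\Aut(G)$; since $\Inn(F_n)$ acts there through $\Inn(G) \subseteq \Aut(G)$, this descends to an action of $\Out(F_n)$. I would let $G$ range over a countable family of finite quasi-simple groups, for instance $\PSL_2(\F_p)$ for primes $p\arrow\infty$ or large alternating groups, yielding a countable supply of finite $\Out(F_n)$-sets of unbounded size.

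The principal combinatorial step is to show that, for each $k$, some such $\Out(F_n)$-set contains an orbit on which the action is $k$-transitive. I would attack this by exploiting the abundance of Nielsen transformations available in $\Aut(F_n)$ for $n\ge 4$: one writes down explicit Nielsen moves which, after projection to $\Epi(F_n,G)/\Aut(G)$, act as $3$-cycles or controlled transpositions on equivalence classes of generating tuples of $G$. Arguments in the spirit of Gilman's work on $\Aut(F_n)$-orbits of generating tuples modulo $\Aut(G)$ should then show that the image of $\Out(F_n)$ in $\Sym$ of the chosen orbit contains the alternating group, hence is $k$-transitive whenever the orbit has size greater than $k$. Letting $|G|$ grow produces faithful $k$-transitive actions for every fixed $k$.

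For faithfulness, Grossman's theorem that $\Out(F_n)$ is residually finite implies that every non-trivial element is detected in the action on $\Epi(F_n,G)/\Aut(G)$ for some finite $G$; consequently the diagonal action on the disjoint union of all the above sets is already faithful. To assemble these into a single faithful action on a countable set that is $k$-transitive for every $k$ simultaneously, I would invoke a Baire category argument in $\Sym(\Z)$: after fixing an enumeration of our actions, the collection of faithful actions is a dense $G_\delta$ and, for each $k$, the collection of actions that are $k$-transitive on a pre-assigned orbit is open dense, so the countable intersection is non-empty.

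The main obstacle is the combinatorial input in the second stage: verifying that the Nielsen-generated subgroup of $\Sym(\Epi(F_n,G)/\Aut(G))$ really contains an alternating group of the appropriate size. This amounts to producing automorphisms of $F_n$ that realize prescribed permutations on $\Aut(G)$-classes of generating $n$-tuples of $G$, and it is precisely this step that requires $n \ge 4$, since one needs enough auxiliary generators to stabilize while others are being permuted.
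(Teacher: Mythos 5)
There is a genuine gap, and it is located exactly where you flag ``the main obstacle'' --- but the fatal problem is actually in your third paragraph, the assembly step. Your construction produces, for each $k$, a \emph{finite} $\Out(F_n)$-set $\bar V_n(G)$ (for $G$ finite of growing order) on which the action is $k$-transitive. These cannot be assembled into a single highly transitive action on a countable set: the disjoint union of finite orbits is not even $2$-transitive, since no group element can move a pair of points lying in distinct orbits to a pair lying in one orbit. The Baire category argument you invoke does not repair this. Such arguments (Dixon's proof for free groups) work because $\Hom(F_m,\Sym(\Z))\cong\Sym(\Z)^m$ is a Baire space in which one can perturb the generators independently; for a finitely presented, rigid group like $\Out(F_n)$, the space $\Hom(\Out(F_n),\Sym(\Z))$ is a closed subspace cut out by the relations, and the claim that ``the collection of actions that are $k$-transitive on a pre-assigned orbit is open dense'' is precisely the statement one cannot establish --- density there is equivalent to the theorem itself. (Note also that $k$-transitivity is a $G_\delta$ condition, not an open one.) Were your argument valid, it would apply verbatim to $\SL_n(\Z)$ via congruence quotients, whereas the paper explicitly points out that it is open, and perhaps false, whether $\SL_n(\Z)$ admits a highly transitive action. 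The paper itself names this as the essential difficulty with transplanting Gilman's finite-group method: in the infinite setting one must verify $k$-transitivity directly, for every $k$, on one fixed countable set.

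The paper's resolution is to replace the family of finite simple groups by a single \emph{infinite} finitely generated simple group $G$ --- a Tarski monster not satisfying a group law --- so that $\bar V_n(G)$ is one countable set, and then to prove $k$-transitivity for all $k$ by induction. The induction step is carried out with Nielsen transformations much as you envisage, but the required flexibility comes not from size estimates on finite groups but from structural properties of Tarski monsters: every proper subgroup is cyclic, which yields an unbounded ``spread'' (for any finite list of elements there is an $h$ generating $G$ with each of them), and Wiegold's theorem that $G^k$ is generated by few elements, which is what allows a word $w$ in the Nielsen moves to act as a prescribed element on the tuple being moved while acting trivially on the $k-1$ tuples being fixed. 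Faithfulness is then not obtained from residual finiteness of $\Out(F_n)$ but from the absence of a group law in $G$ (your Grossman argument only gives faithfulness of the disjoint union of the finite actions, which, as above, cannot be made highly transitive). So the overall strategy of acting on $\Aut(G)$-classes of generating tuples is the right one, but the choice of $G$ finite, and the attempt to pass to the limit by Baire category, cannot be made to work.
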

\noindent This theorem is stated again below as Theorem \ref{thm:Main} with a more detailed description of the action. It is interesting to note that the answer to the above mentioned question is solved for $\Out(F_n)$ before it is solved for $\SL_n(\Z)$ or for $\Mod(\Sigma_g)$. In fact it might very well be the case that $\SL_n(\Z)$ fails to admit such a highly transitive permutation representation. 

Let us just mention that very few examples are known of countable groups that admit highly transitive action on a set.  Examples include non abelian free groups (see \cite{MD_free_HT,Dixon:free_HT} as well as fundamental groups of surfaces of genus at least $2$ (see \cite{Kitroser:HT}), the group of finitely supported permutations $\Sym_C(\Z)$ and, of course, any subgroup of $\Sym(\Z)$ containing one of these examples.

\subsection{Notation}
Throughout the paper $G$ will denote a finitely generated simple group, with $d = d(G)$ its minimal number of generators. Given an n-tuple $\g = (g_1,g_2,\ldots, g_n) \in G^n$ we denote by $\sgg = \langle g_1,g_2,\ldots, g_n \rangle < G$ the subgroup generated by it. Let $F_n = \langle x_1,x_2,\ldots,x_n \rangle$ be the nonabelian free group on $n$ generators. There is a natural identification of $G^n$  with $\Hom(F_n,G)$; associating the $n$-tuple $\g$ with the homomorphism
\begin{eqnarray*}
\alpha_{\g}: F_n & \arrow & G \\
\alpha_{\g}(x_i) & \mapsto & g_i.
\end{eqnarray*}
We will use $\Hom(F_n,G)$ and $G^n$ interchangeably. For example we will identify the set of epimorphisms $\Epi(F_n,G)$ with the set $$V_n(G) = \{ \g \in G^n : \sgg = G \},$$ of all generating $n$-tuples.

The group $\Aut(G)$ acts on $\Hom(F_n,G)$ (from the left) by post-composition and $\Aut(F_n)$ acts on $\Hom(F_n,G)$ (from the right) by pre-composition. One easily verifies that in the corresponding action of $\Aut(F_n)$ on $G^n$ the standard Nielsen transformations act as follows:
\begin{align*}
    R_{i,j}^{\pm} &:  (g_1,\ldots,g_i,\ldots,g_n) \rightarrow
    (g_1,\ldots,g_i\cdot g_j^{\pm 1},\ldots,g_n), \\
    L_{i,j}^{\pm} &:  (g_1,\ldots,g_i,\ldots,g_n) \rightarrow
    (g_1,\ldots,g_j^{\pm 1} \cdot g_i,\ldots,g_n), \\
    P_{i,j} &:  (g_1,\ldots,g_i,\ldots,g_j,\ldots,g_n) \rightarrow
    (g_1,\ldots,g_j,\ldots,g_i,\ldots,g_n),  \\
    I_{i} &:  (g_1,\ldots,g_i,\ldots,g_n) \rightarrow
    (g_1,\ldots,g_i^{-1},\ldots,g_n).
\end{align*}
We denote by $\bar V_n(G)$ the set of all $\Aut(G)$-orbits on $V_n(G)$. Given $\g \in V_n(G)$ we denote by $\left[\g\right]$ the corresponding equivalence class in $\bar V_n(G)$. Since the $\Aut(F_n)$ action preserves $V_n(G)$ and commutes with the $\Aut(G)$ action it descends to an action of $\Gamma := \Out(F_n) = \Aut(F_n) / \Inn(F_n)$ on $\bar V_n(G)$. 

\subsection{The main theorem}
A Tarski monster group is a noncyclic group $G$, all of whose
proper subgroups are cyclic. It was shown by A. Yu. Ol{\cprime}shanski{\u\i} that for every large enough prime $p$ there exist uncountably many Tarski monsters all of whose subgroups are isomorphic to $\Z/p\Z$, as well as Tarski monsters all of whose subgroups are infinite cyclic (see \cite{Ol:Tarski_Monsters}).

Establishing the existence of infinite Tarski monsters is difficult, but once such a group is given many structural results follow directly from the definition. A Tarski monster $G$ is necessarily simple. For every $n \ge 2$, the collection of generating $n$-tuples is given by
$$V_n(G) =  G^n \setminus \left \{\g \in G^n \ | \ \sgg {\text{ is cyclic}} \right \}.$$

\begin{definition}
An action of a group on a set $\Gamma \curvearrowright \Omega$ is called {\it{$k$-transitive}} if it is transitive on ordered $k$-tuples of distinct points. It is called {\it{highly transitive}} if it is $k$-transitive for every $k \in \N$.
\end{definition}
\begin{definition}
A group $G$ is said to satisfy a {\it{group law}}, if there exists some $m$ and some non-trivial word in the free group $w \in F_m$ such that $$w(\g) = 1, \qquad \forall \g \in G^m.$$
\end{definition}
\noindent
For example every abelian group satisfies the law given by the commutator $[x,y] \in F_2$ and every group of exponent $p$ satisfies the word $x^p \in \Z$. The goal of this paper is to prove the following
\begin{theorem} \label{thm:Main}
Let $G$ be a Tarski monster and $n \ge 4$ then the action of $\Gamma = \Out(F_n)$ on the character variety $\bar{V}_n(G)$ is highly transitive. Moreover, this action is faithful if and only if $G$ does not satisfy a group law.
\end{theorem}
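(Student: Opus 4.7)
My plan for high transitivity is to recast $k$-transitivity as a single transitivity statement. Given pairwise distinct classes $[\g^{(1)}], \ldots, [\g^{(k)}] \in \bar{V}_n(G)$, I would consider the diagonal homomorphism $\alpha := (\alpha_{\g^{(1)}}, \ldots, \alpha_{\g^{(k)}}) \colon F_n \to G^k$. Its image is a subdirect product of a power of the simple group $G$; by Remak's theorem this image is the diagonal embedding of $G^r$ associated to some partition of $\{1,\ldots,k\}$, and any non-singleton block $\{i, j\}$ would yield $\sigma \in \Aut(G)$ with $\alpha_{\g^{(j)}} = \sigma \circ \alpha_{\g^{(i)}}$, i.e.\ $[\g^{(i)}] = [\g^{(j)}]$, a contradiction. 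Hence $\alpha$ is surjective, so an ordered $k$-tuple of distinct classes in $\bar{V}_n(G)$ is the same data as a generating $n$-tuple of $G^k$, taken up to $\Aut(G)^k$. Thus $k$-transitivity of $\Out(F_n) \curvearrowright \bar{V}_n(G)$ becomes the single statement that $\Aut(F_n) \times \Aut(G)^k$ acts transitively on $V_n(G^k)$.

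To prove this latter transitivity, I would use a Nielsen reduction to a canonical form. For $k = 1$ the Tarski structure is decisive: locate indices $i, j$ with $[g_i, g_j] \neq 1$ (which exist since $\g$ generates non-cyclically), so that $(g_i, g_j)$ already generates $G$ by the Tarski property; each remaining entry $g_l$ is then a word $w_l(g_i, g_j)$, and a sequence of Nielsen moves of the form $R_{l, \cdot}^{\pm}$ collapses it to $1$, bringing $\g$ to $(g_i, g_j, 1, \ldots, 1)$. Any two such canonical tuples can then be matched modulo $\Aut(G)$ by further Nielsen manipulations that use the trailing coordinates as ``scratch slots'' (this realizes all of $\Aut(F_2)$ acting on the leading pair). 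For $k > 1$ a single Nielsen move touches all $k$ factors simultaneously, and here the hypothesis $n \ge 4$ plays its role by providing at least two spare coordinates in which arbitrary elements of $G^k$ can be built up and deposited, allowing the reduction to be carried out in parallel across factors. An alternative, perhaps cleaner, route is induction on $k$: assuming $(k-1)$-transitivity, show that the pointwise stabilizer of $(k-1)$ distinct classes acts transitively on the complement.

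For faithfulness, the ``law $\Rightarrow$ not faithful'' direction is constructive: from a law $w \in F_m$ with $m \le n - 1$, the automorphism $\phi \in \Aut(F_n)$ defined by $x_1 \mapsto x_1 \cdot w(x_2, \ldots, x_{m+1})$ and $x_i \mapsto x_i$ for $i \ge 2$ acts trivially on every $\g \in V_n(G)$ (since $w$ is a law), and is non-inner because any conjugator $h \in F_n$ fixing $x_2, \ldots, x_n$ must lie in $\bigcap_{i \ge 2} \langle x_i \rangle = \{1\}$ for $n \ge 3$; in the $p$-Tarski case this applies with $m = 1$ and the condition $m \le n-1$ is automatic. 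The converse direction translates the hypothesis $[\phi \g] = [\g]$ for every $\g \in V_n(G)$ into the statement that $\phi$ preserves the kernel of every epimorphism $F_n \to G$; since the intersection of all these kernels is precisely the ``law'' subgroup of $F_n$, triviality of this subgroup (the no-law assumption) together with a genericity and rigidity argument should force $\phi$ to be inner.

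The main obstacle I foresee is the parallel Nielsen reduction for $G^k$: the Tarski dichotomy gives very strong control inside a single factor, but a Nielsen move in $\Aut(F_n)$ acts on all $k$ factors of $G^k$ at once, so the reduction to canonical form must be synchronized across factors without unraveling previous simplifications. The combination of $n \ge 4$ (yielding two scratch slots) and the pairwise distinctness of the classes $[\g^{(i)}]$ (which is exactly what makes the image of $\alpha$ the full $G^k$, and hence provides room to maneuver) is the set of resources I would rely on to carry out this combinatorial juggling.
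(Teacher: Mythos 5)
Your reformulation of $k$-transitivity as transitivity of $\Aut(F_n)\times\Aut(G)^k$ on $V_n(G^k)$ is correct and is essentially the paper's Proposition~\ref{prop:gen_prod}; likewise your instinct that the whole difficulty is the ``synchronized'' Nielsen reduction is exactly right. But the proposal stops at naming that difficulty rather than resolving it, and the resolution is where all the content lies. First, to ``deposit an arbitrary element of $G^k$ into a spare coordinate'' you need some three of the remaining rows to generate $G^k$; this already requires Wiegold's theorem that $d(G^k)\le 3$ for a Tarski monster (Theorem~\ref{thm:Wieg}), which you never invoke, and it can only be \emph{arranged} for a tuple you are free to choose --- a generating $n$-tuple of $G^k$ need not contain any generating sub-triple, since a column that generates $G$ can have all three of its entries in one cyclic subgroup after restriction to three rows. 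For this reason your symmetric plan (reduce both tuples to a canonical form $(a_1,a_2,a_3,1,\dots,1)$) would require showing that \emph{every} element of $V_n(G^k)$ is redundant, which is neither established nor needed: the paper's induction is deliberately asymmetric, choosing the base $k$-tuple to satisfy the strong conditions of Lemma~\ref{lem:tech_3_gen} and only having to move one dictated column $\h$. Even then, every intermediate matrix must keep its columns pairwise $\Aut(G)$-inequivalent, and the one matrix involving three dictated entries $h_1,h_2,h_3$ forces the exclusion of the cyclic $4\times 4$ configuration (condition~(\ref{itm:config}) of Lemma~\ref{lem:tech_3_gen}) --- this is the real role of $n\ge 4$, not merely ``two scratch slots,'' and nothing in your outline produces it.

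On faithfulness, the forward direction is essentially the paper's (though you should reduce an arbitrary law to one in two letters, as in Section~\ref{sec:2_univ}, rather than assume $m\le n-1$). The converse as you state it has two gaps. The intersection of the kernels of all \emph{epimorphisms} $F_n\to G$ is not the verbal subgroup of a law: a word in that intersection is only known to vanish on generating tuples, and upgrading this to a genuine law uses the Tarski structure of non-generating pairs together with the commutator trick $v=wzw^{-1}z^{-1}$. And ``a genericity and rigidity argument should force $\phi$ to be inner'' is precisely the hard step: the paper needs Magnus's theorem on normal closures of primitive elements plus the Cayley-tree axis argument of Lemma~\ref{lem:faithful_Rn} to show that an outer automorphism cannot preserve every such kernel. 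As it stands the proposal is a correct map of the terrain, but both summits --- the combinatorial bookkeeping behind Lemma~\ref{lem:tech_3_gen} and the rigidity behind Lemma~\ref{lem:faithful_Rn} --- remain unclimbed.
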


Clearly every Tarski monster of finite exponent satisfies a group law. But there are torsion free Tarski monsters that do not admit a group law. Indeed it was observed by Zusmanovich \cite[Theorem 5.1]{Zus:Tarski_law} that Tarski monsters fail to satisfy a group law if and only if they admit finite generating sets whose corresponding Cayley graphs have arbitrarily large girth. The existence of such Tarski monsters was established by Ol{\cprime}shanski{\u\i}; the argument based on \cite[Corollary 1]{OL:residualing} is described by Zusmanovich [loc. cit.]. A complete proof can be found in \cite{OOS:Lacunary_H_groups}.   
\begin{corollary}
For $n \ge 4$ let $\Gamma_n = \Out(F_n)$ and let $\Gamma_n \curvearrowright \Omega$ the faithful highly transitive action constructed above. Let $\Delta  < \Gamma_n$ be any subgroup containing a non-trivial subnormal subgroup. Then the following holds
\begin{enumerate}
\item \label{itm:subnor} The action of $\Delta$ on $\Omega$ is highly transitive, 
\item \label{itm:max} $\Delta$ contains an infinite index maximal subgroup,
\item \label{itm:gplaw} $\Delta$ does not satisfy any group law, in particular it cannot be finite, abelian, or even virtually solvable. 
\item \label{itm:inter} If $\Delta' < \Gamma$ is another group containing a non-trivial subnormal subgroup then $\Delta \cap \Delta' \ne \trivgp$. 
\end{enumerate}
\end{corollary}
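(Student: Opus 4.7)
Throughout I would identify $\Gamma_n$ with its image as a dense subgroup of the Polish group $\Sym(\Omega)$, where $\Omega = \bar{V}_n(G)$, and rely on three standard facts about $\Sym(\Omega)$: its only closed normal subgroups are $\trivgp$ and $\Sym(\Omega)$ itself; its center is trivial and centralizers are closed, so the centralizer of any dense subgroup is trivial; and it contains nonabelian free subgroups, hence satisfies no nontrivial group law.

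For item \ref{itm:subnor}, I would descend along a subnormal chain $\Delta_0 = H_0 \triangleleft H_1 \triangleleft \cdots \triangleleft H_k = \Gamma_n$ and show inductively that every $\overline{H_i} = \Sym(\Omega)$: continuity of conjugation gives $\overline{H_i} \triangleleft \overline{H_{i+1}}$, and once $\overline{H_{i+1}} = \Sym(\Omega)$ the first fact combined with $H_i \supseteq \Delta_0 \neq \trivgp$ forces $\overline{H_i} = \Sym(\Omega)$; hence $\Delta \supseteq \Delta_0$ is dense, i.e., acts highly transitively. Item \ref{itm:max} is then immediate: a $2$-transitive action on the infinite set $\Omega$ is primitive, so point stabilizers in $\Delta$ are maximal and of infinite index. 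For \ref{itm:gplaw}, any word identity $w = 1$ satisfied by $\Delta$ propagates, via continuity of the word map and density of $\Delta$, to $\Sym(\Omega)$, contradicting the third fact; since finite, abelian, and virtually solvable groups all satisfy nontrivial laws (the last by raising generators to a power high enough to land inside a solvable normal subgroup of finite index), $\Delta$ belongs to none of these classes.

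For the most interesting item \ref{itm:inter}, my plan is to prove the stronger assertion that whenever $L$ acts faithfully and highly transitively on a countable set, any two nontrivial subnormal subgroups $A, B$ of $L$ satisfy $A \cap B \neq \trivgp$. The proof proceeds by induction on $d_L(A) + d_L(B)$, the sum of minimal subnormal defects. If both $A, B$ are normal in $L$ then $[A, B] \leq A \cap B$, and vanishing of this commutator would place $A$ inside the centralizer of the dense subgroup $B$, which is trivial by the second fact. For the inductive step, choose $A \triangleleft H \triangleleft \triangleleft L$ with $d_L(H) = d_L(A) - 1$. The inductive hypothesis in $L$ applied to $(H, B)$ first yields $H \cap B \neq \trivgp$. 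Then item \ref{itm:subnor} guarantees that $H$ itself acts faithfully and highly transitively on $\Omega$, so a second application of the inductive hypothesis \emph{inside} $H$, to the pair $(A, H \cap B)$ whose defects in $H$ sum strictly less than in $L$, produces a nonidentity element of $A \cap (H \cap B) = A \cap B$. Since $\Delta \cap \Delta' \supseteq \Delta_0 \cap \Delta_0'$, this proves \ref{itm:inter}. The crucial subtlety is arranging the recursion to descend into the intermediate subgroup $H$, which is possible precisely because \ref{itm:subnor} keeps supplying faithful highly transitive actions.
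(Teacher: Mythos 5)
Your proofs of items (\ref{itm:subnor})--(\ref{itm:gplaw}) are correct and essentially identical to the paper's: density of the closure along the subnormal chain via topological simplicity of $\Sym(\Omega)$, primitivity from $2$-transitivity, and propagation of laws to the closure. For item (\ref{itm:inter}) you take a genuinely different route. The paper also argues by induction on subnormal defect, but it stays inside $\Gamma_n$ throughout: it first reduces to the case where $N'$ is normal, then walks down a subnormal chain $N=N_l\lhd\cdots\lhd N_0=\Gamma_n$ to the first index $j$ with $N'\cap N_j=\trivgp$ and exhibits the two \emph{commuting} nontrivial subnormal subgroups $M=N_j$ and $M'=N'\cap N_{j-1}$, both dense, forcing $\Sym(\Omega)$ to be abelian. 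You instead run a single well-founded induction on $d_L(A)+d_L(B)$ that descends \emph{into} the intermediate subgroup $H$, crucially re-invoking item (\ref{itm:subnor}) to know that $H$ itself acts faithfully and highly transitively before recursing on the pair $(A,H\cap B)$ inside $H$; your base case (two normal subgroups with trivial intersection commute, hence one lies in the trivial centralizer of a dense subgroup) is the same contradiction the paper reaches. Both arguments are sound; yours is easier to verify as a formal induction (the defect of $H\cap B$ in $H$ is bounded by that of $B$ in $L$, and the sum strictly drops because $d_L(A)\ge 2$), while the paper's is shorter and avoids changing the ambient group. One small point to make explicit: as written your inductive step always peels a layer off $A$, which gives no reduction when $d_L(A)=1<d_L(B)$; since the claim is symmetric in $A$ and $B$ you should say you descend on whichever subgroup has defect at least $2$. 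With that one-line fix the argument is complete.
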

\begin{proof}
Saying that $\Gamma_n \curvearrowright \Omega$ is highly transitive is equivalent to saying that the corresponding embedding $\Gamma_n < \Sym(\Omega)$ has a dense image; with respect to the pointwise convergence topology on $\Sym(\Omega)$. But then $\overline{N} \lhd \lhd \overline{\Gamma_n} = \Sym(\Omega)$ for any subnormal subgroup $N \lhd \lhd \Gamma_n$ and since $\Sym(\Omega)$ is topologically simple $N$ is also dense, proving (\ref{itm:subnor}). Now (\ref{itm:max}) follows, since a highly transitive action is automatically primitive so $\Delta_\omega < \Delta$ is a maximal subgroup for every $\omega \in \Omega$. Since $\Sym(\Omega)$ contains a free subgroup it cannot satisfy any group law itself and this is automatically inherited by any dense subgroup, which establishes (\ref{itm:gplaw}). Finally for (\ref{itm:inter}) assume by way of contradiction that $N, N' \lhd \lhd \Gamma_n$ are two non-trivial subnormal subgroups that intersect trivially. We claim that there are two (possibly equal) non-trivial subnormal subgroups $M,M'$ that commute. Indeed let $N = N_l \lhd N_{l-1} \lhd \ldots \lhd N_0 = \Gamma_n$ and assume first that $N' \lhd \Gamma_n$ is normal. If $j$ is the first index such that $N' \cap N_j = \trivgp$ we set $M = N_j$ and $M' = N' \cap N_{j-1}$. The argument is concluded by induction on the minimal length of a subnormal series for $N'$.  
Now since $M,M'$ are both dense in the topology induced from $\Sym(\Omega)$ it turns out that $\Sym(\Omega)$ is abelian which is absurd. 
\end{proof}
Item (\ref{itm:max}) above can be viewed as an analogue for $\Out(F_n)$ of the theorems due to Margulis and So\u{\i}fer \cite{MS:first,MS:Maximal} and to Ivanov \cite[Theorem 5]{Ivanov:MCG} concerning the existence of infinite index maximal subgroups (i.e. of the existence of primitive actions on countable sets) . Margulis and So\u{\i}fer prove that a finitely generated linear group admits an infinite index maximal subgroup if and only if it is not virtually solvable. Ivanov's theorem states that finitely generated subgroup of the mapping class group of a surface admits a maximal subgroup of infinite index if and only if it is not virtually abelian. Both theorems were then generalized in \cite{GG:Primitive} to general countable subgroups. On a certain level our current result is much stronger because highly transitive actions are rarer than primitive actions. On the other hand our current method is restricted to very special classes of subgroups of $\Out(F_n)$. 
 
\subsection{Gilman's work on the Wiegold conjecture}
Transitivity of the action of $\Gamma \curvearrowright \bar{V}_n(G)$, for various groups $G$ was extensively studied in various different settings in the last few decades. We refer the readers to a comprehensive survey article on this subject by Alex Lubotzky \cite{Lub:Aut_Fn}. In particular it is conjectured by Wiegold that the action of $\Out(F_3) \curvearrowright \bar{V}_3(G)$ is transitive for every finite simple group $G$.

Of particular interest from our point of view is the work of Gilman \cite{Gilman:SL2p} who proved the Wiegold conjecture in the case $G = \PSL_2(\F_p)$ for every prime $p \ge 5$ and $n \ge 3$. Gilman showed, in fact, that the image of $\Out(F_n)$ in $\Sym(\bar{V}_n(\PSL_2(\F_p))$ is either the full symmetric group, or the alternating group. Thus proving a much stronger statement.

Gilman has further proved that if $G$ is a finite simple nonabelian group and $n \geq 4$ then $\Out(F_n)$ acts as a symmetric or alternating group on at least one of its orbits in $\bar{V}_n(G)$.
This result was extended to $n=3$ by Evans \cite{Evans:Wiegold_2}.

The current paper grew out of an attempt to find an infinite setting in which Gilman's proof can be implemented. Technically there is one qualitative difference between the finite and the infinite case. In the former in order to prove that a finite permutation group contains the alternating group it is sufficient to prove primitivity and then establish the existence of one long cycle. In the latter $k$-transitivity has to be verified directly, by induction, for every $k \in \N$.

\subsection*{Acknowledgment}
We would like to thank Dawid Kielak for his helpful comments. We would also like to thank the referee for reading so carefully the original manuscript and providing corrections as well as very helpful information pertaining to the structure of Tarski Monsters. 

This joint work was initiated following the Lis Gaines Workshop ``Action of $\Aut(F_n)$ on representation varieties'', that was held in January 2009 at Sde-Boker, Israel.


\section{Preliminary results}

\subsection{Generation of powers} \label{sec:gen_power}
We denote by $M_{n,k}(G)$ the collection of $n \times k$ matrices with entries taken from the group $G$. Let
\begin{equation} \label{eqn:matrix}
A = \begin{pmatrix}
g_{1}^{1}  & \ldots & g_{1}^{k} \\
g_{2}^{1} & \ddots  & g_{2}^{k} \\
\vdots & &  \vdots \\
g_{n}^{1} & \ldots & g_{n}^{k}
\end{pmatrix} =
\begin{pmatrix}
-  & \g_1 & - \\
- & \g_2 & - \\
  & \vdots  &  \\
- & \g_n & - \\
\end{pmatrix} =
\begin{pmatrix}
|  &  & | \\
\g^{1} & \hdots  & \g^{k} \\
| & & |
\end{pmatrix},
\end{equation}
be such a matrix, with $\g_1,\ldots,\g_n \in G^k$ and $\g^1,\ldots \g^k \in G^n$ denoting the corresponding row and column vectors respectively.
The following result was originally used by P. Hall \cite{Hall:gen} in the realm of finite simple groups. Compare for example \cite[Prop. 6]{KL:prob_gen}.
For the convenience of the reader we add a proof.

\begin{proposition} \label{prop:gen_prod}
Let $G$ be a nonabelian simple finitely generated group, $k \ge
d(G)$ and $A \in M_{n,k}(G)$ a matrix as above, all of whose columns
generate: $\g^i \in V_n(G), \ \ \forall 1 \le i \le k$. Denote by
$\alpha^1,\ldots, \alpha^k \in \Hom(F_n,G)$ the corresponding
epimorphisms, by $K^1,\ldots,K^k$ their respective kernels and by $A
= (\alpha^1, \alpha^2,\ldots,\alpha^k)$ the combined homomorphism
defined by
\begin{eqnarray*}
A  : F_n & \arrow & G^{k} \\
w & \mapsto & (w(\g^1),w(\g^2),\ldots,w(\g^k)).
\end{eqnarray*}
Then the following are equivalent

\begin{enumerate}
\item \label{itm:inVbar} $\{[\g^i] \ | \ 1 \le i \le k \}$ are all different as elements of $\bar{V}_n(G)$.
\item \label{itm:kernels} $\alpha^j \left(\cap_{i \ne j} K^i \right) = G$, for every $1 \le j \le k$.
\item \label{itm:products} $A: F_n \arrow G^k$ is surjective.
\end{enumerate}
\end{proposition}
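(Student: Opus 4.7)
I would prove the cyclic chain (\ref{itm:products}) $\Rightarrow$ (\ref{itm:kernels}) $\Rightarrow$ (\ref{itm:inVbar}) $\Rightarrow$ (\ref{itm:products}). The first observation I would record at the outset is that (\ref{itm:inVbar}) is equivalent to the kernels $K^1,\ldots,K^k$ being pairwise distinct: indeed $[\g^i]=[\g^j]$ means there is $\phi\in\Aut(G)$ with $\phi\circ\alpha^i=\alpha^j$, which forces $K^i=K^j$; conversely, if $K^i=K^j$ then $\alpha^j\circ(\alpha^i)^{-1}$ is a well-defined automorphism of $G$ witnessing $[\g^i]=[\g^j]$.

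The two short implications go as follows. For (\ref{itm:products}) $\Rightarrow$ (\ref{itm:kernels}), given any $g\in G$ pick $w\in F_n$ with $A(w)=(1,\ldots,1,g,1,\ldots,1)$ (the $g$ in slot $j$); then $w\in\bigcap_{i\ne j}K^i$ and $\alpha^j(w)=g$. For (\ref{itm:kernels}) $\Rightarrow$ (\ref{itm:inVbar}), suppose some $K^i=K^j$ with $i\ne j$; then $\bigcap_{\ell\ne j}K^\ell\subseteq K^i=K^j$, so $\alpha^j\bigl(\bigcap_{\ell\ne j}K^\ell\bigr)=\{1\}$, contradicting (\ref{itm:kernels}).

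The main work is in (\ref{itm:inVbar}) $\Rightarrow$ (\ref{itm:products}), which I would prove by induction on $k$. The case $k=1$ is just the surjectivity of $\alpha^1$. For the inductive step, let $L=\bigcap_{i<k}K^i$; by induction $F_n/L\cong G^{k-1}$ via $w\mapsto(\alpha^1(w),\ldots,\alpha^{k-1}(w))$, which identifies each $K^i/L$ (for $i<k$) with the coordinate hyperplane $\prod_{\ell\ne i}G_\ell$. The image of $L$ in $G^k$ under $A$ lies in $\{1\}^{k-1}\times G$ and equals $\{1\}^{k-1}\times\alpha^k(L)$; since $L\lhd F_n$ and $G$ is simple, $\alpha^k(L)$ is either trivial or all of $G$. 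In the latter case the image of $A$ contains both $G^{k-1}\times\{1\}$ (by induction) and $\{1\}^{k-1}\times G$, hence all of $G^k$, and we are done. In the former case $L\subseteq K^k$, so $K^k/L$ is a proper normal subgroup of $G^{k-1}$ containing none other than a maximal one (since $K^k$ is a maximal normal subgroup of $F_n$ because $G$ is simple); but the maximal normal subgroups of $G^{k-1}$, with $G$ nonabelian simple, are precisely the coordinate hyperplanes $\prod_{\ell\ne j}G_\ell$. This forces $K^k=K^j$ for some $j<k$, contradicting (\ref{itm:inVbar}).

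\textbf{Main obstacle.} The delicate point is the classification of normal subgroups of $G^{k-1}$ used in the induction. This classification is standard for nonabelian simple groups (every normal subgroup is a subproduct of factors), and I would simply invoke it; it is the only place where the nonabelian hypothesis in the statement enters in an essential way.
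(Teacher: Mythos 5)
Your proof is correct, and for the substantive implication it takes a genuinely different route from the paper's. The paper closes the cycle as (2) $\Rightarrow$ (3) $\Rightarrow$ (1) $\Rightarrow$ (2), and its proof of the hard step (1) $\Rightarrow$ (2) is P.~Hall's commutator trick: by induction on $k$ one finds $\tilde{x},\tilde{y}\in F_n$ with $A(\tilde{x})=(*,e,\dots,e,x)$ and $A(\tilde{y})=(e,*,e,\dots,e,y)$ for a non-commuting pair $x,y\in G$, so that $A([\tilde{x},\tilde{y}])=(e,\dots,e,[x,y])$ exhibits a nontrivial element of $\alpha^k\bigl(\bigcap_{i<k}K^i\bigr)$, which is normal and hence all of $G$. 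You instead prove (1) $\Rightarrow$ (3) by identifying $F_n/L\cong G^{k-1}$ and invoking the classification of normal subgroups of a power of a nonabelian simple group to force $K^k=K^j$; nonabelianness enters through that classification rather than through a single commutator. Both arguments are standard and of comparable length: the paper's is more self-contained, while yours makes the structural picture (the image of $A$ is a subdirect product, and subdirect products of copies of a nonabelian simple group split off factors) more transparent. Your preliminary reformulation of (1) as ``the kernels $K^1,\dots,K^k$ are pairwise distinct'' is a genuine simplification that absorbs both the paper's proof of (3) $\Rightarrow$ (1) and its $k=2$ base case. One small imprecision: in the case $\alpha^k(L)=G$ you assert that the image of $A$ contains $G^{k-1}\times\{1\}$ ``by induction,'' but induction only gives that the image surjects onto the first $k-1$ coordinates; you must combine this with the already-established containment of $\{1\}^{k-1}\times G$ (a normal subgroup) to conclude $A(F_n)=G^k$. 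This is a one-line repair, not a gap.
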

\begin{proof}
(\ref{itm:kernels}) $\Longrightarrow$ (\ref{itm:products}): is obvious. \\
(\ref{itm:products}) $\Longrightarrow$ (\ref{itm:inVbar}): Assume (\ref{itm:inVbar}) fails. By definition this means that there is some $\sigma \in \Aut(G)$ and indices $i,j$ such that $\sigma \circ \alpha^j = \alpha^i$. But then $A(F_n) < \{(g_1,\ldots,g_k) \ | \ g_i = \sigma(g_j)\}$ contradicting (\ref{itm:products}). \\
(\ref{itm:inVbar}) $\Longrightarrow$ (\ref{itm:kernels}). By
symmetry we argue for $j = k$. Since $\alpha^k : F_n \arrow G$ is
surjective and $\cap_{i=1}^{k-1}(K^i) \lhd F_n$ is normal, it
follows that $\alpha^k \left(\cap_{i=1}^{k-1}(K^i) \right) \lhd G$.
By simplicity of $G$ we need only rule out the possibility that the
latter group is trivial. If $k=2$ this implies that $K^1 < K^2$ and
hence there is a commutative diagram:
$$\xymatrix{
F_n \ar[r]^{\alpha^1} \ar[dr]^{\alpha^2}& G \ar[d]^{\eta} \\
        & G
}$$
Since $G$ is simple $\eta$ must be an isomorphism, contradicting (\ref{itm:inVbar}).

We proceed by induction on $k$. Recall that by assumption, $G$ is
nonabelian and let $x,y \in G$ be two elements with $[x,y] \ne e$.
By our induction assumption we can find $\tilde{x}, \tilde{y} \in
F_n$ such that
\begin{eqnarray*}
A(\tilde{x}) & = & (*,e,e,\ldots,e,x), \\
A(\tilde{y}) & = & (e,*,e,\ldots,e,y).
\end{eqnarray*}
It follows that $A \left([\tilde{x},\tilde{y}] \right) =
(e,e,\ldots,e,[x,y])$, contradicting the triviality of $\alpha^k
\left(\cap_{i=1}^{k-1}(K^i) \right)$.
\end{proof}

\begin{note}
We assumed implicitly in the above proposition that $k$ is finite.
If $k = \infty$ is infinite it is no longer possible for $A$ to be surjective, but the above proposition remains correct upon replacing (\ref{itm:products}) by the assumption that $\overline{A(F_n)} = G^{\times \infty}$ where the closure is taken with respect to the product topology.
\end{note}

\subsection{Powers of Tarski monsters.}
Denote by $d(G)$ the minimal number of generators of a group $G$. It was shown by Wiegold and Wilson in \cite[Theorem 4.3]{WW:Growth_seq} that $d(G) \le d(G^k) \le d(G)+1, \quad \forall k \in \N$ for every finitely generated infinite simple group $G$. In the case where $G$ is a torsion free Tarski Monster it was further shown by Wiegold in \cite[Theorem 2]{Wiegold:square} that $d(G^k) = d(G) = 2$. Since Wiegold argument in the latter paper is essential to our argument and the paper itself was somewhat difficult to obtain we repeat here Wiegold's argument in our own terminology. 
\begin{theorem} (Wiegold) \label{thm:Wieg}
Let $G$ be a Tarski monster then $2 \le d(G^k) \le 3, \quad \forall k \in \N$. Moreover if $G$ is torsion free then $d(G^k) =2 \quad \forall k \in \N$.
\end{theorem}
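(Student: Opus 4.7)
The plan is to deduce both bounds from Proposition \ref{prop:gen_prod}, which reduces the inequality $d(G^k) \le n$ to the task of exhibiting $k$ pairwise distinct classes in $\bar V_n(G)$. The lower bound $d(G^k) \ge 2$ is immediate: a Tarski monster is simple and non-cyclic, hence non-abelian, so any two non-commuting elements generate a non-cyclic (thus improper) subgroup, giving $d(G) = 2$; projection onto a coordinate then yields $d(G^k) \ge d(G) = 2$.

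For the general upper bound $d(G^k) \le 3$, I fix any generating pair $(a,b) \in V_2(G)$ and consider the triples $(a,b,c) \in V_3(G)$ indexed by $c \in G$. Any $\phi \in \Aut(G)$ fixing such a triple must fix the generating set $\{a,b\}$, hence be the identity, so the map $c \mapsto [(a,b,c)] \in \bar V_3(G)$ is injective. Since $G$ is infinite this produces infinitely many distinct classes, and Proposition \ref{prop:gen_prod} delivers a surjection $F_3 \to G^k$ for every $k$.

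For the torsion-free refinement $d(G^k) \le 2$, the plan is to show that the family $\{(a,b^n)\}_{n \ge 1}$ realises infinitely many distinct classes in $\bar V_2(G)$. First, each pair lies in $V_2(G)$: if $\langle a,b^n \rangle$ were cyclic it would sit inside the centralizer $C_G(b^n)$, but $b$ also centralizes $b^n$, so we would have $\langle a,b\rangle \le C_G(b^n)$, a proper subgroup of $G$ (since $b^n \ne e$ and $G$ has trivial center), contradicting $\langle a,b\rangle = G$. For distinctness, I would suppose $\phi \in \Aut(G)$ satisfies $\phi(a) = a$ and $\phi(b^n) = b^m$ with $n,m \ge 1$. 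The structural input here is that in a torsion-free Tarski monster the centralizer $C_G(g)$ of any non-trivial element $g$ is the unique maximal cyclic subgroup containing $g$: proper by simplicity, cyclic by the Tarski property, and maximal because every cyclic subgroup containing $g$ is abelian and hence already lies inside $C_G(g)$. It follows that $C_G(b^n) = C_G(b^m) = C_G(b) \cong \Z$, and since $\phi$ carries centralizers to centralizers it restricts to an automorphism of this infinite cyclic subgroup, necessarily $\pm \id$. Thus $\phi(b^n) \in \{b^n, b^{-n}\}$, which combined with $\phi(b^n) = b^m$ yields $m = \pm n$, and positivity forces $m = n$.

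The main obstacle is precisely this centralizer-rigidity input for torsion-free Tarski monsters; once one has the uniqueness of the maximal cyclic subgroup through $b$ and the resulting collapse $C_G(b) = C_G(b^n)$, the $\pm 1$ dichotomy for $\Aut(\Z)$ does all the remaining work, and everything else reduces to a routine application of Proposition \ref{prop:gen_prod}.
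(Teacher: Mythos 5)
Your proof is correct, and while the reduction to Proposition~\ref{prop:gen_prod} and the argument for $d(G^k)\le 3$ coincide with the paper's (same matrix with constant rows $a,b$ and a row of distinct third entries, distinguished because an automorphism fixing the generating pair $\{a,b\}$ is trivial), your treatment of the torsion-free case takes a genuinely different route. The paper first manufactures a special generating pair: it takes $\langle a\rangle$ to be a \emph{maximal} (cyclic) subgroup, picks $b$ not commuting with $a$, shows $G=\langle a, b^{-1}ab\rangle$ via a normalizer argument, and then uses the columns $(a, a^i b)$; the rigidity comes from the fact that an automorphism $\sigma$ fixing $a$ and sending $a^l b\mapsto a^m b$ also fixes $b^{-1}ab$, hence is the identity, forcing $l=m$. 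You instead take an arbitrary generating pair and the columns $(a,b^n)$, and your rigidity mechanism is different: the automorphism need not be trivial, but since $C_G(g)$ is proper (trivial center), cyclic (Tarski property), and absorbs every abelian subgroup through $g$, one gets $C_G(b^n)=C_G(b)\cong\Z$ for all $n\ge 1$, so any class-preserving automorphism stabilizes this copy of $\Z$ and acts on it as $\pm\mathrm{id}$, whence $b^m=b^{\pm n}$ and $m=n$. Both arguments use torsion-freeness in the same essential place (to guarantee infinite cyclic proper subgroups and hence infinitely many distinct classes), but yours trades the paper's maximal-subgroup/normalizer bookkeeping for centralizer rigidity, which is arguably a cleaner and more self-contained piece of structure theory; the paper's version has the side benefit of exhibiting an explicit generating pair fixed elementwise by the relevant automorphism. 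I see no gap in your argument.
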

\begin{proof}
Given $k \in \N$ we have to establish the existence of a matrix $A \in M_{3 \times k}(G)$ satisfying the equivalent conditions of Proposition \ref{prop:gen_prod}.

Indeed let $a,b \in G$ be a generating tuple, and consider the
matrix:
$$A = \begin{pmatrix}
a & a & \ldots & a \\
b & b & \ldots & b \\
c_1& c_2 & \ldots & c_k
\end{pmatrix} \in M_{3 \times k}(G)$$
Clearly every column generates. But if two columns, say $\g^1$ and $\g^2$ are in the same $\Aut(G)$ orbit then we have an automorphism $\sigma \in \Aut(G)$ such that $\sigma(a) = a; \sigma(b) = b; \sigma(c_1) = c_2$. Since $a,b$ generate this implies that $\sigma = \id$ and $c_1 = c_2$. Thus condition (\ref{itm:inVbar}) of Proposition \ref{prop:gen_prod} will be satisfied if all $c_i$ are different.

Assume now that $G$ is torsion free. Since $G$ is finitely generated it contains a proper maximal subgroup $\langle a \rangle$, let $b \in G$ be any element that fails to commute with $a$. By maximality of $\langle a \rangle$ if $[a,b^{-1}ab] = 1$ then $b^{-1}ab = a^m$ for some $m$ and hence $b \in N_G (\langle a \rangle) = Z_G(\langle a \rangle)$, contradicting our choice of $b$. Thus $G = \langle a , b^{-1}ab \rangle$. 

Consider a matrix $A \in M_{2 \times k}(G)$ given by 
$$A = \begin{pmatrix}
a & a & \ldots & a \\
ab & a^{2}b & \ldots & a^{k}b
\end{pmatrix}$$
Clearly every column generates. Moreover if $\sigma \in \Aut(G)$ is such that $a^{\sigma} = a$ and $(a^lb)^{\sigma} = (a^mb)$ then $(b^{-1}ab)^{\sigma} = ((a^{m-l}b)^{-1}a(a^{m-l}b)) = b^{-1}ab$. Since $\langle a, b^{-1}ab \rangle = G$ and both these elements are fixed by $\sigma$ the latter automorphism must be the identity. Hence condition (\ref{itm:inVbar}) of Proposition \ref{prop:gen_prod} is satisfied and the proof is complete. \end{proof}

\subsection{Spread}
\begin{definition}
We say that a $2$-generated group $G$ has {\it{spread greater or equal to $k$}} if for every $\g = (g_1,g_2, \ldots,g_k) \in G^k$ there exists some $h \in G$ such that $$G = \langle h, g_i \rangle \quad \forall 1 \le i \le k. $$
\end{definition}
\begin{lemma} \label{lem:spread}
A Tarski monster group $G$ has spread greater or equal to $k$ for every $k \in \N$.
\end{lemma}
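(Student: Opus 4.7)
My plan is to rewrite the spread condition in terms of centralizers and then deduce it from B.H.\ Neumann's lemma.

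For the first step, observe that since $G$ is a Tarski monster it is simple (as noted in the introduction), hence centerless; so for any $g \ne e$ the centralizer $C_G(g)$ is a proper subgroup of $G$, and therefore cyclic. The key equivalence I would establish is: for any non-identity $g \in G$ and any $h \in G$,
$$\langle h, g \rangle = G \iff h \notin C_G(g).$$
Indeed, if $h \in C_G(g)$ then $\{h,g\} \subseteq C_G(g)$, so $\langle h, g \rangle \subseteq C_G(g) \ne G$. Conversely, if $\langle h,g\rangle \ne G$ then, by the defining property of a Tarski monster, this subgroup is cyclic, hence abelian, so $h$ commutes with $g$, i.e.\ $h \in C_G(g)$.

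With this reformulation, given non-identity $g_1,\ldots,g_k \in G$, producing the required witness $h$ reduces to finding an element of $G$ outside the union $\bigcup_{i=1}^k C_G(g_i)$ of finitely many proper subgroups. Because $G$ is an infinite simple group, every proper subgroup has infinite index (otherwise the action on the finite coset space would give a nontrivial finite permutation representation, contradicting simplicity of the infinite group $G$). By B.H.\ Neumann's lemma, a group cannot be covered by finitely many cosets of infinite-index subgroups, so $G \setminus \bigcup_i C_G(g_i) \ne \emptyset$, and any element of this set serves as the required $h$.

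The only subtlety worth flagging is that the definition of spread is being applied implicitly to \emph{non-identity} tuples: if some $g_i = e$ then $\langle h, g_i\rangle = \langle h \rangle$ is cyclic and cannot equal the non-cyclic group $G$, so the spread condition vacuously fails on such tuples and must be excluded. Beyond this clarification I anticipate no real obstacle; an elementary alternative to invoking Neumann's lemma is to exploit the structure of maximal cyclic subgroups directly (distinct ones intersect trivially, since $C_G(x)$ is the unique maximal cyclic subgroup containing any $e \ne x$), and then split into the bounded-torsion case (cardinality rules out a finite cover of an infinite group) and the torsion-free case (a pigeonhole argument applied to $\{ab^n\}_{n \in \Z}$ for non-commuting $a,b$ forces $\langle a,b\rangle$ to sit inside one maximal cyclic, contradicting non-commutativity).
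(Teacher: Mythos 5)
Your argument is correct and is essentially the paper's own proof in different clothing: for $e \ne g$ in a Tarski monster the centralizer $C_G(g)$ is precisely the unique maximal proper subgroup containing $g$ (it is proper because $G$ is centerless, and any proper subgroup containing $g$ is cyclic, hence abelian, hence lies in $C_G(g)$), so your union of centralizers coincides with the paper's union of maximal subgroups $H_i$, and both proofs conclude by the same fact that an infinite group is not a finite union of infinite-index subgroups. Your caveat about identity entries is legitimate and applies equally to the paper's proof as written, but it is harmless in context: wherever the lemma is invoked, the $g_i$ are entries of generating tuples or elements already known to generate $G$ with something, hence nontrivial.
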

\begin{proof}
Let $k \in \N$ and $\g = (g_1,g_2,\ldots,g_k) \in G^k$ be as above.
Since $G$ is finitely generated, it follows from Zorn's lemma that
every $g_i$ is contained in a maximal proper subgroup $g_i \in H_i
\lneqq G$. By the definition of a Tarski monster every $H_i$ is
cyclic and hence of infinite index. But an infinite group is never a
union of a finite number of subgroups of infinite index. Thus, any
$h \in G \setminus \cup_{i = 1}^k H_i$ will satisfy the condition
required in the definition of the spread.
\end{proof}

\subsection{Stronger generation properties}
In order to prove Theorem \ref{thm:Wieg} we constructed a matrix $3 \times k$ all of whose columns represent different elements of $\bar{V}_n(G)$. In the sequel we will need a matrix satisfying a stronger condition, which is somewhat technical but useful. \begin{lemma} \label{lem:tech_3_gen}
For any $k, n \in \N$ with $n \ge 4$ there exists a matrix $A \in M_{n \times k}(G)$, as in Equation (\ref{eqn:matrix}) in Section \ref{sec:gen_power}, with the following properties:
\begin{enumerate}
\item \label{itm:pair} Every pair of entries generates $G$: $$(i,j) \ne (l,m) \Rightarrow \langle g^{i}_j, g^{l}_{m} \rangle = G,$$
\item \label{itm:three_row} Every three rows generate $G^k$: $$1 \le i < j < l  \le n \Rightarrow \langle \g_i,\g_j,\g_l \rangle = G^k.$$
\item \label{itm:config} The following configuration will never appear as a $4 \times 4$ minor of the matrix, for any $\sigma,\tau,\eta, \theta  \in \Aut(G)$ and $p,q,r,s \in G$.
\begin{equation*} \label{eqn:config}
\begin{pmatrix}
\cdot              & p^{\tau}  & p^{\eta} & p^{\theta} \\
q^{\sigma}     & \cdot       & q^{\eta} & q^{\theta} \\
r^{\sigma}      & r^{\tau}   & \cdot      & r^{\theta} \\
s^{\sigma}     & s^{\tau}   & s^{\eta} & \cdot
\end{pmatrix}
\end{equation*}
\end{enumerate}
\end{lemma}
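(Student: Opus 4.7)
The plan is to construct the matrix $A = (g^j_i) \in M_{n \times k}(G)$ one entry at a time, in column-major order, and show that at each step the set of ``forbidden'' choices for the next entry fails to exhaust $G$. The three conditions contribute three kinds of forbidden sets: condition~(1) excludes a finite union of proper cyclic subgroups (those through previously placed entries), while conditions~(2) and~(3) each exclude only finitely many individual points. Since a Tarski monster is infinite and all of its proper subgroups are cyclic of infinite index, such a forbidden set cannot cover $G$, so a valid entry is always available.

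Condition~(1) is handled directly by Lemma~\ref{lem:spread}: the bad set is a finite union of maximal cyclic subgroups through previously-placed entries, which has infinite complement. For condition~(2), Proposition~\ref{prop:gen_prod} rephrases ``three rows generate $G^k$'' as ``the columns of the induced $3 \times k$ sub-matrix are pairwise distinct in $\bar{V}_3(G)$''. When placing $(i,j)$ with $i \ge 3$, for each triple of rows $i_1 < i_2 < i_3 = i$ and each prior column $j' < j$, we must avoid an automorphism $\sigma \in \Aut(G)$ with $\sigma(g^{j'}_{i_r}) = g^j_{i_r}$ for $r = 1,2,3$. By condition~(1), $\langle g^{j'}_{i_1}, g^{j'}_{i_2} \rangle = G$, so any such $\sigma$ is determined uniquely by its values on these two generators; the bad choice of $g^j_i$ is then the single element $\sigma(g^{j'}_{i_3})$, and summing over the (finitely many) relevant triples $(j', i_1, i_2)$ excludes only finitely many points.

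Condition~(3) will be the main obstacle. In column-major order, the last off-diagonal entry of any $4 \times 4$ minor with rows $i_1 < i_2 < i_3 < i_4$ and columns $j_1 < j_2 < j_3 < j_4$ is always $(i_3, j_4)$, occupying position $(a,b) = (3,4)$ of the minor; thus the check is triggered only when placing an entry of this type. The defining equations $g^{j_b}_{i_a} = \sigma_b(p_a)$ for $a \ne b$ carry a global $\Aut(G)$-ambiguity, which I normalize by setting $\sigma_1 = \id$, giving $p_a = g^{j_1}_{i_a}$ for $a \in \{2,3,4\}$. Condition~(1) guarantees $\langle g^{j_1}_{i_3}, g^{j_1}_{i_4}\rangle = G$, so the $b=2$ row equations uniquely determine $\sigma_2$, after which $p_1 := \sigma_2^{-1}(g^{j_2}_{i_1})$ is computed; similarly $\sigma_3$ is determined. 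The technical crux is that the two equations $\sigma_4(g^{j_1}_{i_2}) = g^{j_4}_{i_2}$ and $\sigma_4(p_1) = g^{j_4}_{i_1}$ should uniquely determine $\sigma_4$, which requires $\langle g^{j_1}_{i_2}, p_1 \rangle = G$. If these failed to generate, they would lie in a common proper cyclic subgroup $H$; but then applying $\sigma_2$ would place both $g^{j_2}_{i_2}$ and $g^{j_2}_{i_1}$ inside $\sigma_2(H) < G$, contradicting pairwise generation. Hence $\sigma_4$ is determined, the bad value of $g^{j_4}_{i_3}$ is a single forced element, and only polynomially many such minors need be checked at each step. Combining all three sources, the excluded set is a finite union of proper cyclic subgroups plus finitely many points, which is never all of $G$.
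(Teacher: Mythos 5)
Your construction is essentially the paper's: build $A$ greedily in column-major order, use Lemma \ref{lem:spread} for condition (1), and observe that for conditions (2) and (3) the would-be violating automorphisms are uniquely determined by already-placed entries (because pairs of entries generate $G$), so each potential violation forbids only a single point. The paper packages the same idea via ``$\sigma$-near'' and ``$\sigma$-related'' columns, but the content is identical, and your identification of $(i_3,j_4)$ as the last off-diagonal entry of any offending $4\times 4$ minor matches the paper's treatment of the entry labelled $r^{\theta}$.

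There is, however, one concrete error in your verification of condition (3), at the step where you need $\langle p, q\rangle = G$ in order to pin down $\sigma_4$. You argue that if $p,q$ lay in a common proper subgroup $H$, then applying $\sigma_2=\tau$ would place both $g^{j_2}_{i_1}$ and $g^{j_2}_{i_2}$ in $\tau(H)$. But in the forbidden configuration the $(2,2)$ position of the minor is the unconstrained diagonal entry: $q^{\tau}$ simply does not occur in the matrix, so $\tau(q)\neq g^{j_2}_{i_2}$ in general and no contradiction with condition (1) arises from column $j_2$. The step as written fails. The fix is immediate: use column $j_3$ instead, where \emph{both} $p^{\eta}=g^{j_3}_{i_1}$ and $q^{\eta}=g^{j_3}_{i_2}$ are genuine off-diagonal entries; condition (1) gives $\langle g^{j_3}_{i_1}, g^{j_3}_{i_2}\rangle = G$, hence $\langle p,q\rangle = \eta^{-1}\bigl(\langle p^{\eta},q^{\eta}\rangle\bigr) = G$. (This is exactly why the paper routes its chain of near-relations through the third column, composing $\sigma^{-1}\eta$ with $\eta^{-1}\theta$, rather than through the second.) With that one repair your argument is correct and complete.
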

\begin{proof}

We construct the matrix entries one by one according to the following order
$$g^{1}_1, g^{1}_2, g^{1}_3, \ldots, g^{2}_1,g^{2}_2, \ldots, g^{k}_{n-1},g^{k}_n,$$
namely column by column. Making sure in the process that all three desired properties hold:

\noindent {\bf{(\ref{itm:pair}) Every pair of elements generates:}}
Upon adding the element $g^m_l$ one can make sure that it generates $G$ with every previous entry by Lemma \ref{lem:spread}. In fact the proof of that lemma makes it clear that there are infinitely many possible choices of an element that will satisfy this condition.  Thus we can guarantee the validity of (\ref{itm:pair}) even if we require later in the proof to exclude finitely many possibilities at every stage.

\noindent {\bf{General discussion:}}
 Assuming from now on that (\ref{itm:pair}) is indeed satisfied we notice that for a given $2 \times 2$ minor
$$\begin{pmatrix}
g^{i}_s & g^{j}_s \\
g^{i}_t & g^{j}_t
\end{pmatrix} $$
there is at most one $\sigma \in \Aut(G)$ such that $g^{j}_s =
\left(g^{i}_s\right)^{\sigma} $ and $g^{j}_t =
\left(g^{i}_t\right)^{\sigma}$. If this holds for a given $1 \le i <
j \le k$ and for some choice of $1 \le s < t \le n$ we say that the
columns $i,j$ are {\it{$\sigma$-near}}. Next we extend the notion of
near columns to be an equivalence relation - declaring two columns
$(i,j)$ to be {\it{$\sigma$-related}} if there is a sequence of
distinct columns $i=i_0, i_1,i_2, \ldots ,i_r =j$ such that
$i_{\alpha}$ and $i_{\alpha + 1}$ are $\sigma_{\alpha}$-near and
$\sigma = \sigma_0 \sigma_1 \sigma_2 \ldots \sigma_{r-1}$. We will
denote the set of all automorphisms relating two columns $1 \le i <
j \le k$ by:
$$\Xi_{i,j} := \left\{\sigma \in \Aut(G) \ | \ j {\text{ is $\sigma$-related to $i$}}\right\},$$
it is clear from the definition, and from the fact that any two matrix elements generate $G$ that this set is finite. Finally let us note that these relations can be defined even for matrices that are only partially defined, namely for matrices with some missing entries.

Assume that we are now adding the element $g^m_l$ namely that we have already constructed the matrix
$$\begin{pmatrix}
g^1_1     & g^2_1  & \ldots & g^{m-1}_1     & g^m_1  & \square\\
\vdots     & \vdots  & \ddots & \vdots            & \vdots & \square\\
g^1_{l-1} &  \vdots & \ldots & g^{m-1}_{l-1} & g^{m}_{l-1} & \square\\
\vdots     & \vdots  & \ddots & \vdots             & \square & \square \\
g^1_n     & g^2_n  & \ldots & g^{m-1}_n &   \square & \square \\
\end{pmatrix}$$
We choose an element $g^m_l$ which will satisfy condition (\ref{itm:pair}) while excluding the finite set of possibilities:
$$\bigcup_{i = 1}^{m-1}  \bigcup_{\sigma \in \Xi_{i,m}} \left( g^i_l \right)^{\sigma}$$

\noindent {\bf{(\ref{itm:three_row}) Every three rows generate $\bf{G^k}$}:}
By Proposition \ref{prop:gen_prod} all we have to do in order to prove that $G^k = \langle \g_i,\g_j,\g_l \rangle$, is to exclude the possibility that for some $1 \le s < m \le k$ and some $\sigma \in \Aut(G)$ we have
$$\begin{pmatrix}
g^s_i \\
g^s_j  \\
g^s_l
\end{pmatrix} =
\begin{pmatrix}
g^m_i \\
g^m_j \\
g^m_l
\end{pmatrix}^{\sigma},$$
where the automorphism applied to the vector just means that it is applied to each entry separately. But such a configuration is not possible by construction because at the time of the choice of the element $g^m_l$, the columns $s,m$ are already $\sigma$-related (in fact they are even $\sigma$-near) and thus the choice of $g^m_l = \left( g^s_l \right)^{\sigma}$ is ruled out.

\noindent {\bf{(\ref{itm:config}) Excluding cyclic configurations:}} A configuration such as the one appearing in (\ref{itm:config}) is excluded because at the time construction of the element labeled $r^{\theta}$ this choice is in fact invalid. Indeed at that time the first and the last column are already $\sigma^{-1} \theta = (\sigma^{-1} \eta) (\eta^{-1} \theta)$ related so that the choice $r^{\theta} = \left(r^{\sigma} \right) ^{\sigma^{-1} \theta}$ is invalid. Equivalently, the same choice can be ruled out, by the fact that the second and fourth columns are $\tau^{-1} \theta$-related.
\end{proof}

\section{Highly transitive actions.}
Let $G$ be a Tarski monster. We argue by induction on $k$ that the action of $\Gamma = \Out(F_n)$ on $\bar{V}_n(G)$ is $k$-transitive. 

\subsection{The induction basis}
\begin{proposition}  \label{prop:ind_basis}
For every $n \geq 3$, $\Aut(F_n)$ acts transitively on $V_n(G)$,
where $G$ is a Tarski monster group.
\end{proposition}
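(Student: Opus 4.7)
My plan is to reduce every $\g \in V_n(G)$ by Nielsen moves to a fixed standard form $(a_0, b_0, 1, 1, \ldots, 1)$, where $(a_0, b_0)$ is a preferred generating pair of $G$; transitivity follows immediately. The argument proceeds in three stages.

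First, I would show that some pair $(g_i, g_j)$ already generates $G$. If not, each $\langle g_i, g_j \rangle$ would be a proper subgroup of the Tarski monster $G$, hence cyclic, so every pair of coordinates would commute, making $G = \sgg$ abelian. But $G$ is simple and noncyclic, hence nonabelian, a contradiction. After permuting coordinates via $P_{i,j}$ moves, I may thus assume $\langle g_1, g_2 \rangle = G$. Then, for each $i \geq 3$, I write $g_i^{-1}$ as a word $w_i$ in $g_1, g_2$ and apply the corresponding sequence of $R_{i, 1}^{\pm}, R_{i, 2}^{\pm}$ moves; this right-multiplies $g_i$ by $w_i$, reducing it to the identity and bringing $\g$ to the standard form $(g_1, g_2, 1, 1, \ldots, 1)$.

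The heart of the argument is then to connect any two standard tuples $(a, b, 1, \ldots, 1)$ and $(c, d, 1, \ldots, 1)$. The key sublemma is: if $(a, b), (a, b') \in V_2(G)$, then $(a, b, 1, \ldots, 1) \sim (a, b', 1, \ldots, 1)$ in $V_n(G)$. To prove it, I would use the third slot as scratch space. First, since $\langle a, b \rangle = G \ni b'$, a sequence of $R_{3, 1}^{\pm}, R_{3, 2}^{\pm}$ moves writes $b'$ into slot $3$, producing $(a, b, b', 1, \ldots, 1)$. Second, because now $\langle a, b' \rangle = G \ni b^{-1}$ sits in slots $1$ and $3$, a sequence of $R_{2, 1}^{\pm}, R_{2, 3}^{\pm}$ moves kills the $b$ in slot $2$, yielding $(a, 1, b', 1, \ldots, 1)$. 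The swap $P_{2, 3}$ then gives $(a, b', 1, \ldots, 1)$. An entirely analogous argument lets me replace the first coordinate while fixing the second. To link arbitrary $(a, b)$ and $(c, d)$, I would apply Lemma \ref{lem:spread} to $\{a, c\}$ to obtain $h \in G$ with $\langle a, h \rangle = \langle c, h \rangle = G$, and chain
$$(a, b, 1, \ldots) \sim (a, h, 1, \ldots) \sim (c, h, 1, \ldots) \sim (c, d, 1, \ldots).$$

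The hypothesis $n \geq 3$ is essential precisely in the sublemma: the free third slot is needed to stash an auxiliary generator of $G$ while an existing coordinate is being swapped out. Beyond that the only checks are routine: every Nielsen move preserves the subgroup generated by the tuple, so all intermediate configurations automatically stay in $V_n(G)$, and the existence of a suitable word at each replacement step follows from the fact that two of the coordinates at that moment genuinely generate $G$.
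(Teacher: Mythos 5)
Your proof is correct and rests on the same two ingredients as the paper's: the observation that any generating tuple of a Tarski monster contains a generating pair, and Lemma \ref{lem:spread} with $k=2$ to produce a pivot element $h$ linking two generating pairs via Nielsen moves that right-multiply one slot by words in two slots generating $G$. The only difference is organizational --- you first normalize both tuples to the form $(a,b,1,\ldots,1)$ and then chain through $(a,h,1,\ldots,1)$ and $(c,h,1,\ldots,1)$, whereas the paper chains directly from $\g$ to $\h$ using the third slot as scratch space --- but the mechanism is identical.
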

\begin{proof}
Let us fix a basepoint $\g = (g_1,g_2,\ldots,g_n) \in V_n(G)$. We
are at liberty to choose a convenient base point - and using Lemma
\ref{lem:spread} repeatedly we impose the condition $\langle g_i,g_j
\rangle = G \ \forall i \ne j$. Now given any $\h = (h_1,h_2,\ldots,
h_n) \in V_n(G)$ we have to exhibit a sequence of Nielsen
transformations taking $\g$ to $\h$. Since any generating set
contains a generating pair we may assume, after renumbering the
indices, that $\langle h_1,h_2 \rangle = G$. Since $G$ has spread
greater or equal to $2$ there exists some $z \in G$ such that
\begin{equation} \label{eqn:z_ass}
\langle g_2,z \rangle = \langle h_1,z \rangle = G.
\end{equation}
We proceed with the following sequence of Nielsen transformations:
\begin{equation} \label{eqn:seq_trans} \g =
\begin{pmatrix} g_1 \\ g_2 \\ g_3 \\ \vdots \\ g_n \end{pmatrix}
\looparrowright \begin{pmatrix} g_1 \\ g_2 \\ z \\ \vdots \\ g_n \end{pmatrix}
\looparrowright \begin{pmatrix} h_1 \\ g_2 \\ z \\ \vdots \\ g_n \end{pmatrix}
\looparrowright \begin{pmatrix} h_1 \\ h_2 \\ z \\ \vdots \\ g_n \end{pmatrix}
\looparrowright \begin{pmatrix} h_1 \\ h_2 \\ h_3 \\ \vdots \\ h_n \end{pmatrix}
= \h.
\end{equation}
The first $\looparrowright$ stands for a sequence of Nielsen transformations of the form $w(R_{3,1},R_{3,2})$ where $w$ is any free word on two generators satisfying $w(g_1,g_2) = g_3^{-1}z$. The existence of such a word is guaranteed by the fact that $\langle g_1,g_2 \rangle = G$. The next three $\looparrowright$ use the exact same argument, but instead of using the fact that $\langle g_1, g_2 \rangle = G$ they appeal in turn to the two parts of Equation (\ref{eqn:z_ass}) and then to our assumption that $\langle h_1,h_2 \rangle = G$.
\end{proof}
Since $\Aut(F_n) \curvearrowright V_n(G)$ is transitive, so is the
quotient action  $\Gamma \curvearrowright \bar{V}_n(G)$, which is
exactly what we require for the basis of our induction.

\subsection{General Tarski Monsters}
We choose a base $k$-tuple $$\left([\g^1],[\g^2],[\g^3], \ldots, [\g] = [\g^k] \right)$$ of distinct elements in $\bar{V}_n(G)$. In order to establish the induction step we have to show that for any $[\h] \not \in \{[\g^1],[\g^2],[\g^3], \ldots, [\g^{k-1}]\}$ there is a group element $\gamma \in \Gamma$ such that $\gamma [\g^i] = [\g^i], \ \forall 1 \le i \le k-1$ and $\gamma [\g] = [\h]$.

Again we have a lot of freedom in the choice of our basis $k$-tuple. We make our choice by picking a matrix
$$
A = \begin{pmatrix}
g_{1}^{1}  & \ldots & g_{1}^{k} \\
g_{2}^{1} & \ddots  & g_{2}^{k} \\
\vdots & &  \vdots \\
g_{n}^{1} & \ldots & g_{n}^{k}
\end{pmatrix} =
\begin{pmatrix}
-  & \g_1 & - \\
- & \g_2 & - \\
  & \vdots  &  \\
- & \g_n & - \\
\end{pmatrix} =
\begin{pmatrix}
|  &  & | \\
\g^{1} & \hdots  & \g^{k} \\
| & & |
\end{pmatrix},
$$
satisfying the conditions guaranteed by Lemma \ref{lem:tech_3_gen},
and taking its columns as representatives. The element $\h =
(h_1,h_2,\ldots,h_n) \in V_n(G)$ on the other hand is dictated to
us. But, since $\langle \h \rangle = G$ we may assume, after
possibly reordering the indices that $\langle h_1,h_2 \rangle = G$.

We wish to proceed in much the same way as we did in Equation (\ref{eqn:seq_trans}), taking $\g$ to $\h$, but this time we have to be careful not to touch the elements $\{\g^i \ | \ 1 \le i \le k-1\}$. Let us find an element $z \in G$ such that all the $3 \times k$ matrices $A_2,A_3,A_4$ below satisfy the equivalent conditions of Proposition \ref{prop:gen_prod}
\begin{eqnarray*} \label{eqn:z_ass_3}
A_2 & = &
\begin{pmatrix}
-  & - & \g_2 & - & - \\
-  & - & \g_3 & - & - \\
 g^1_4 & g^2_4 & \ldots & g^{k-1}_4 & z
\end{pmatrix}  \\ \\
A_3 & = &
\begin{pmatrix}
g^1_1  & g^2_1 & \ldots & g^{k-1}_1 & h_1 \\
 - & - & \g_3 & - & - \\
 g^1_4 & g^2_4 & \ldots & g^{k-1}_4 & z
\end{pmatrix}  \\ \\
A_4 & = &
\begin{pmatrix}
g^1_1  & g^2_1 & \ldots & g^{k-1}_1 & h_1 \\
g^1_2  & g^2_2 & \ldots & g^{k-1}_2 & h_2 \\
g^1_4 & g^2_4 & \ldots & g^{k-1}_4 & z \\
\end{pmatrix}
\end{eqnarray*}
For the matrices $A_2, A_4$ this can be achieved by avoiding
finitely many bad values of $z$, by an argument identical to that
used in the proof of Lemma \ref{lem:tech_3_gen}. For the matrix
$A_3$ we can use the same argument assuming $\langle h_1, g_3
\rangle = G$. On the other hand if $\langle h_1,g_3 \rangle$ is a
cyclic subgroup then the desired condition $[h_1,g_3,z] \ne
[g^i_1,g^i_3,g^i_4] \ \ \forall 1 \le i < k$ is satisfied
automatically, for any choice of $z$ which makes $(h_1,g_3,z)$ into
a generating set, just because all the other columns satisfy the
condition that every pair of elements generates $G$ and the last
column does not.

We now proceed by applying the following sequence of Nielsen transformations:
$$\g =
\begin{pmatrix} g_1 \\ g_2 \\ g_3 \\ g_4 \\ \vdots \\ g_n \end{pmatrix}
\looparrowright \begin{pmatrix} g_1 \\ g_2 \\ g_3 \\ z \\ \vdots \\ g_n \end{pmatrix}
\looparrowright \begin{pmatrix} h_1 \\ g_2 \\ g_3 \\ z \\ \vdots \\ g_n \end{pmatrix}
\looparrowright \begin{pmatrix} h_1 \\ h_2 \\ g_3 \\ z \\ \vdots \\ g_n \end{pmatrix}
\looparrowright \begin{pmatrix} h_1 \\ h_2 \\ h_3 \\ z \\ \vdots \\ g_n \end{pmatrix}
\looparrowright \begin{pmatrix} h_1 \\ h_2 \\ h_3 \\ h_4 \\ \vdots \\ h_n \end{pmatrix}
= \h.$$
This time the first $\looparrowright$ corresponds to $w(R_{4,1},R_{4,2},R_{4,3})$, where $w$ is chosen so as to satisfy the two conditions:
\begin{itemize}
\item $w(g_1,g_2,g_3) = g_4^{-1}z$,
\item $w(g^i_1,g^i_2,g^{i}_3) = \id, \quad \forall 1 \le i < k$.
\end{itemize}
The existence of such a word follows directly from condition (\ref{itm:kernels}) of Proposition \ref{prop:gen_prod}, combined with our assumptions on the matrix $A$. The existence of Nielsen transformations realizing the next three $\looparrowright$, while fixing $\g^i \ \ \forall 1 \le i < k$ follows, in the exact same way using our assumptions on the matrices $A_2,A_3,A_4$ respectively.

The last $\looparrowright$ can be treated in a similar fashion as long as the matrix:
\begin{eqnarray*} \label{eqn:z_ass_2}
A_5 =
\begin{pmatrix}
g^1_1  & g^2_1 & \ldots & g^{k-1}_1 & h_1 \\
g^1_2  & g^2_2 & \ldots & g^{k-1}_2 & h_2 \\
g^1_3 & g^2_3 & \ldots & g^{k-1}_3 & h_3 \\
\end{pmatrix}
\end{eqnarray*}
satisfies the conditions of Proposition \ref{prop:gen_prod}.

Note that if there is a pair of indices such that $\langle h_i,h_j \rangle \ne G$ these conditions are automatically satisfied, perhaps after rearrangement of the indices. Indeed after rearranging the indices so that $$\langle h_2,h_3 \rangle \ne G$$ the last column is the only one in $A_5$ with this property and thus it cannot be in the $\Aut(G)$ orbit of any of the other columns. Thus we can assume that every pair of $h$ entries generates the entire group.

Finally if the conditions of Proposition \ref{prop:gen_prod} are not
satisfied for $A_5$. Not even after we change the order of the
indices, by making arbitrary permutations of the first four rows of
$A$. This means that there are four automorphisms $\sigma,
\tau,\eta, \theta \in \Aut(G)$ and four columns $1 \le i < j < l < m
\le k$ such that the corresponding $4 \times 4$ minor admits the
forbidden configuration described in condition (\ref{itm:config}) of
Lemma \ref{lem:tech_3_gen}$$
\begin{pmatrix}
g^i_1 & g^j_1 & g^l_1 & g^m_1 \\
g^i_2 & g^j_2 & g^l_2 & g^m_2 \\
g^i_3 & g^j_3 & g^l_3 & g^m_3 \\
g^i_4 & g^j_4 & g^l_4 & g^m_4
\end{pmatrix} =
\begin{pmatrix}
\cdot & (h_1)^{\tau} & (h_1)^{\eta} & (h_1)^{\theta} \\
(h_2)^{\sigma} & \cdot & (h_2)^{\eta} & (h_2)^{\theta} \\
(h_3)^{\sigma} & (h_3)^{\tau} & \cdot & (h_3)^{\theta} \\
(h_4)^{\sigma} & (h_4)^{\tau} & (h_4)^{\eta} & \cdot
\end{pmatrix},
$$
contradicting our construction of the matrix $A$.

\section{Concerning faithfulness of the action} \label{sec:faithful}
\subsection{A theorem of Magnus}
We will require the following, well known, theorem of Wilhelm Magnus from 1930:
\begin{theorem} \cite[Theorem N5 page
172]{MKS:Combinatorial_group_theory}.
Let $a,b \in F_m$ be elements of the free group and assume that they generate the same normal subgroup
$$\langle a \rangle^{F_m} = \langle b \rangle^{F_m},$$
then $a = g b^{\epsilon} g^{-1}$ for some $g \in F_m$ and some $\epsilon \in \{\pm 1\}$.
\end{theorem}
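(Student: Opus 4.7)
My plan is to sketch the classical proof of this theorem of Magnus, which runs through the theory of one-relator groups. Set $N = \langle a \rangle^{F_m} = \langle b \rangle^{F_m}$ and $Q = F_m/N$; the statement amounts to the assertion that the one-relator group $Q$ determines its relator up to conjugation and inversion.

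I would begin with two reductions. Since replacing $a$ or $b$ by a cyclic conjugate changes neither the normal closure nor the truth of the conclusion, I may assume both are cyclically reduced. Next, write $a = a_0^p$ and $b = b_0^q$ with $a_0, b_0$ not proper powers, and invoke the Magnus torsion theorem for one-relator groups: it identifies the torsion in $Q$ with conjugates of the cyclic subgroup generated by the image of $a_0$, of order $p$, and equivalently with those generated by the image of $b_0$, of order $q$. Comparing these two descriptions forces $p = q$ and $\langle a_0\rangle^{F_m} = \langle b_0\rangle^{F_m}$. Once the conclusion is shown for $a_0$ and $b_0$, the corresponding statement for $a, b$ follows by taking $p$th powers. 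So I may assume from the outset that $a$ and $b$ are cyclically reduced and not proper powers.

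The main step is then to apply the Magnus--B.\ B.\ Newman spelling theorem: for a cyclically reduced non-proper-power $r$, any freely reduced nontrivial word representing the identity in $\langle F_m \mid r\rangle$ contains a subword which is more than half of some cyclic conjugate of $r^{\pm 1}$. Applied to the nontrivial word $b \in N$ this yields a length bound together with concrete structural information on $b$, and the symmetric application with the roles of $a$ and $b$ swapped gives the matching information in the other direction. A finer bookkeeping, exploiting that both $a$ and $b$ are cyclically reduced normal generators of the same $N$, pins down $b$ as a cyclic conjugate of $a^{\pm 1}$, which translates back into $F_m$ as the desired equation $a = g b^\epsilon g^{-1}$.

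The main obstacle is not the combinatorial bookkeeping of the last paragraph but the two genuinely deep inputs from Magnus theory: the torsion theorem used in the reduction and the spelling theorem used in the main step. Both are proved by the Magnus hierarchy, which inductively rewrites the one-relator presentation as an HNN extension of a one-relator group with shorter relator and applies the Freiheitssatz at each stage. An alternative route invokes the Cohen--Lyndon theorem, which for $a$ not a proper power describes $N^{\mathrm{ab}} = N/[N,N]$ as a free $\Z[Q]$-module of rank one generated by the image of $a$; then $\bar a$ and $\bar b$ are both generators of this cyclic free module and must differ by a unit of $\Z[Q]$, and one concludes after identifying these units as the trivial units $\pm Q$. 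This route trades one deep input for two and is no simpler in practice, so I would stay with the spelling-theorem proof.
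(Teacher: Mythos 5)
First, a point of order: the paper does not prove this statement at all --- it is quoted, with citation, from Magnus--Karrass--Solitar --- so there is no in-paper proof to compare against and I can only judge your sketch on its own terms.

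Your sketch has a genuine gap at its main step. The spelling theorem you invoke --- that for a cyclically reduced non-proper-power $r$, every nontrivial freely reduced word representing the identity in $\langle F_m\mid r\rangle$ contains a subword which is more than half of some cyclic conjugate of $r^{\pm1}$ --- is false. B.~B.~Newman's spelling theorem is a statement about one-relator groups \emph{with torsion}: the relator must be $r=s^n$ with $n\ge 2$, and the conclusion is a subword of length greater than $\frac{n-1}{n}\,|r|$, which is ``more than half'' only because $n\ge 2$. In the torsion-free case --- which is exactly where your reduction deposits you, since after applying the torsion theorem $a_0$ and $b_0$ are not proper powers and $F_m/\langle a_0\rangle^{F_m}$ is torsion-free --- no such statement can hold: it would furnish a Dehn presentation, hence a linear isoperimetric inequality, hence hyperbolicity of every torsion-free one-relator group, contradicted already by $\langle a,t\mid tat^{-1}a^{-2}\rangle\cong BS(1,2)$. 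So the engine of your main step does not exist, and the ``finer bookkeeping'' has nothing to run on. The classical proof (the one in MKS) instead proves the conjugacy statement \emph{directly} by induction along the Magnus hierarchy, realizing $F_m/\langle a\rangle^{F_m}$ as an HNN extension over Magnus subgroups of a one-relator group with shorter relator and invoking the Freiheitssatz at each stage; no spelling lemma intervenes, and your proper-power reduction is not even needed there.

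Two smaller remarks. Your reduction via the torsion theorem is essentially sound (one does get $p=q$ and $\langle a_0\rangle^{F_m}=\langle b_0\rangle^{F_m}$ by comparing the two descriptions of the torsion), but it is solving a problem the standard proof never creates. And the Cohen--Lyndon alternative is not the clean escape you suggest: identifying the units of $\Z[Q]$ with $\pm Q$ requires knowing $Q$ is left-orderable (torsion-free one-relator $\Rightarrow$ locally indicable $\Rightarrow$ left-orderable, results of Brodskii/Howie and Burns--Hale that postdate Magnus by half a century), and even then one only obtains $b\equiv g a^{\pm1}g^{-1}$ modulo $[N,N]$, which is strictly weaker than the asserted equality in $F_m$ and still needs an argument to upgrade.
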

\noindent Namely the normal closure of a cyclic group, determines the generator up to conjugation and inversion.

\subsection{The action $\Out(F_n)$ on the redundant locus of $F_{n-1}$}
\begin{definition}
If $n > d(G)$ the redundant locus of $V_n(G)$ is defined as:
$$R_n(G) = \left \{\phi \in V_n(G) \ | \ \left \langle \phi(x_1),\ldots,\phi(x_{n-1} )\right \rangle = G, {\text{ for some basis }} x_1,\ldots,x_n < F_n \right\}$$
and $\bar{R}_n(G) < \bar{V}_n(G)$ is the image of this (invariant) set, modulo $\Aut(G)$.
\end{definition}

\begin{lemma} \label{lem:faithful_Rn}
For every $n \ge 3$, $\Out(F_n)$ acts faithfully on $\bar{R}_n(F_{n-1})$.
\end{lemma}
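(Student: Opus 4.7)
My plan is to identify the kernels of the redundant homomorphisms with normal closures of primitive elements, use the theorem of Magnus to translate the triviality of the $\Out(F_n)$-action into a conjugacy condition on a representing automorphism, and then use a short list of concrete primitive elements to force this automorphism to be inner.

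First I would show that every $\phi \in R_n(F_{n-1})$ has kernel of the form $\langle p \rangle^{F_n}$ for some primitive $p \in F_n$, and conversely every such normal closure arises in this way. Indeed, if $\phi(x_1'), \ldots, \phi(x_{n-1}')$ generate $F_{n-1}$ for some basis $x_1', \ldots, x_n'$ of $F_n$, then by Hopfianity these form a basis of $F_{n-1}$; lifting $\phi(x_n')^{-1}$ and multiplying against $x_n'$ yields a new basis $y_1, \ldots, y_n$ of $F_n$ with $\phi(y_n) = e$, so $\ker \phi = \langle y_n \rangle^{F_n}$ with $y_n$ primitive. Consequently $[\alpha] \in \Out(F_n)$ acts trivially on $\bar R_n(F_{n-1})$ if and only if $\alpha$ preserves $\ker \phi$ for every $\phi \in R_n$, equivalently $\alpha$ preserves $\langle p \rangle^{F_n}$ for every primitive $p$. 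By Magnus this is in turn equivalent to
$$\alpha(p) \text{ is conjugate in } F_n \text{ to } p^{\pm 1}, \qquad \forall \, p \in F_n \text{ primitive}.$$

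The remaining work is to deduce $\alpha \in \Inn(F_n)$ from this condition. Applying it to $p = x_i$ gives $\alpha(x_i) = g_i x_i^{\epsilon_i} g_i^{-1}$ for some $g_i \in F_n$ and $\epsilon_i \in \{\pm 1\}$; applying it to the primitive element $x_i x_j$ ($i \neq j$) and comparing abelianizations forces all $\epsilon_i$ to a common value $\epsilon$. After composing $\alpha$ with an inner automorphism I may assume $g_1 = e$, so $\alpha(x_1) = x_1^{\epsilon}$. Normalizing $g_j$ on the right by the centralizer $\langle x_j \rangle$ of $x_j^{\epsilon}$ and examining the cyclic reduction, the conjugacy $\alpha(x_1 x_j x_1) \sim (x_1 x_j x_1)^{\epsilon}$ forces $g_j = x_1^{a_j}$ for some $a_j \in \Z$, so that $\alpha(x_j) = x_1^{a_j} x_j^{\epsilon} x_1^{-a_j}$.

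In the case $\epsilon = +1$, evaluating the condition on the primitives $x_j x_k$ for $j, k \geq 2$ and comparing cyclic lengths yields $a_j = a_k$; thus $\alpha$ is conjugation by $x_1^{a}$, an inner automorphism. In the case $\epsilon = -1$ the condition on $x_1 x_2 x_3$ forces $a_2 - a_3 = 1$ (this step uses $n \geq 3$), but then $\alpha(x_1 x_3 x_2)$ turns out to be a cyclically reduced word of length $5$, whereas $(x_1 x_3 x_2)^{-1}$ has cyclic length $3$. This rules out $\epsilon = -1$ and completes the proof. The main technical obstacle throughout is the repeated cyclic-length bookkeeping: one must normalize each $g_j$ modulo the appropriate centralizer and verify the expected free-group cancellations at the four potential boundary sites, but each such verification is an elementary case analysis.
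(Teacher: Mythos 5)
Your proposal is correct, and the first half coincides with the paper's: both identify the kernels of elements of $R_n(F_{n-1})$ with normal closures of primitive elements (via Hopficity of $F_{n-1}$), and both invoke Magnus' theorem to reduce to the statement that $\alpha(p)$ is conjugate to $p^{\pm 1}$ for every primitive $p$. Where you diverge is in extracting innerness from that condition. The paper runs a geometric argument: it realizes each $\alpha(x_i)$ as a hyperbolic isometry of the Cayley tree with translation length $1$ and axis $g_i X_{x_i}$, uses preservation of $\ell(x_i x_j)=2$ to force pairwise intersection of the axes, applies Helly's theorem for trees to get a common fixed point, and normalizes all $g_i$ to $1$ simultaneously; the signs $\epsilon_i$ are then killed by the same $x_1x_2x_3$ computation you use. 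Your route replaces the tree argument by direct cyclic-word bookkeeping on a short list of explicit primitives: $x_ix_j$ to synchronize the signs, $x_1x_jx_1$ to force $g_j\in\langle x_1\rangle\langle x_j\rangle$, $x_jx_k$ ($j,k\ge 2$) to equalize the exponents $a_j$ in the case $\epsilon=+1$, and the pair $x_1x_2x_3$, $x_1x_3x_2$ to exclude $\epsilon=-1$. I checked the key cancellations: normalizing $g_j=x_1^{c}h$ with $h$ not starting in $x_1^{\pm1}$ nor ending in $x_j^{\pm1}$, the cyclic length of $\alpha(x_1x_jx_1)$ is $3+2|h|$, so $h=1$ as you claim; and in the $\epsilon=-1$ case the cyclic-word identity (not merely the length count, which would also allow $a_2-a_3=0$) pins down $a_2-a_3=1$, after which $\alpha(x_1x_3x_2)$ has cyclic length $5$, giving the contradiction. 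What each approach buys: yours is entirely elementary and self-contained, avoiding Bass--Serre theory and Helly for trees, at the price of several case-by-case cancellation checks; the paper's tree argument handles all the conjugators $g_i$ in one conceptual stroke and is less sensitive to the particular choice of test words. Both use $n\ge 3$ in the same places (the need for two indices $j,k\ge 2$, resp.\ the word $x_1x_2x_3$).
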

\begin{proof}
It is easy to verify that $\phi: F_n \arrow F_{n-1}$ corresponds to an element of $R_{n}(F_{n-1})$ if and only if it is surjective and the kernel is generated, as a normal subgroup, by a primitive element of $F_n$.

Let $\alpha \in \Aut(F_n)$ represent an element of $\Out(F_n)$ that is in the kernel of the action on $\bar{R}_n(F_{n-1})$. Thus by definition for every $\phi \in R_{n}(F_{n-1})$ there exists some $\sigma \in \Aut(F_{n-1})$ such that $\phi \circ \alpha = \sigma \circ \phi$, and in particular $$\alpha^{-1}(\ker(\phi)) = \ker (\phi \circ \alpha)  = \ker(\sigma \circ \phi) = \ker(\phi).$$
So $\alpha$ acts trivially on the collection of normal subgroups generated by a primitive element. By Magnus' theorem stated above, for every primitive element $x \in F_n$ there exist $g_x \in F_n, \epsilon_x \in \{\pm 1\}$ such that
\begin{equation} \label{eqn:alpha}
\alpha x = g_x x^{\epsilon} g_x^{-1}.
\end{equation}
In particular setting $g_i = g_{x_i}, \epsilon_i = \epsilon_{x_i}$ for some basis $\x = \{x_1,x_2,\ldots,x_n\}$ we have:
$$\alpha(\x) =
\begin{pmatrix}
\alpha_1 (\x) \\
\alpha_2 (\x) \\
\vdots \\
\alpha_n  (\x) \\
\end{pmatrix} =
\begin{pmatrix}
x_1^{\epsilon_1} \\
g_2 x_2^{\epsilon_2} g_2^{-1} \\
\vdots \\
g_n x_n^{\epsilon_n} g_n^{-1} \\
\end{pmatrix}.$$
Here, since to begin with, $\alpha$ is defined only up to inner automorphisms, we have assumed without loss of generality that $g_1=1$.

Let $Y$ be the $2n$-regular Cayley tree of $F_n$ with respect to the given set of generators. We label the oriented edges of $Y$ by elements of the corresponding symmetric generating set $\{x_1, x_1^{-1},x_2, \ldots, x_n^{-1}\}$ in such a way that $F_n$ is identified with the group of color preserving automorphisms of the tree. Every element $x \in F_n$ acts on $Y$ as a hyperbolic automorphism with translation length $\ell(x) := \min \{d(v,xv) \ | \ v \in Y \} > 0$ and axis $X_x = \{v \in Y \ | \ d(v,xv) = \ell(x)\}$. Equation (\ref{eqn:alpha}) implies that $\alpha(x_i)$ is a hyperbolic element of translation length $1$ whose axis is $g_i X_{x_i}$ (with either of its two possible orientations). In particular the axis of $\alpha(x_i)$ is labeled either $\ldots x_i,x_i,x_i, \ldots$ or with the inverse of this sequence. Similarly the translation length and axis labeling is preserved for any primitive element of $\Aut(F_n)$.

We claim that $X_{\alpha(x_i)} \cap X_{\alpha(x_j)} =  g_i X_i \cap g_j X_j \ne \emptyset, \quad \forall i \ne j.$ Indeed, assuming the contrary, since $\alpha$ preserves the translation length of primitive elements we have
$$2 = \ell(x_ix_j) = \ell (\alpha(x_i) \alpha(x_j)) = 2 + 2 d(X_{\alpha(x_i)}, X_{\alpha(x_j)});$$ so $d(X_{\alpha(x_i)}, X_{\alpha(x_j)}) = 0$ which is a contradiction. Now, by the version of Helly's theorem for trees (see for example \cite[I.6.5 Lemma 10]{Serre:Trees}) this implies that there is a point $o \in \cap_{i = 1 \ldots n} X_{\alpha(x_i)}$. After conjugation by an appropriate power of $x_1 = \alpha(x_1)$ we may assume that
$$o = \cap_{i = 1 \ldots n} X_{\alpha(x_i)} = \cap_{i = 1 \ldots n} X_{i}.$$
But now $X_i = X_{\alpha(x_i)}$ since these two axes share a point and, up to orientation, they have the same coloring; hence upon replacing $g_i$ by $g_i x_i^{m_i}$ for an appropriate choice of power $m_i$, we can assume that $g_i o = o$. This immediately implies that $g_i = 1 \quad \forall i$.

We still have to show that $\epsilon_i = 1 \quad \forall i$. But if, say, $\epsilon_1 = -1$ then the primitive element $x_1x_2x_3$ will map to $x_1^{-1} x_2^{\epsilon_2} x_3^{\epsilon_3}$. A short verification will show that, regardless of the values of $\epsilon_2,\epsilon_3$, this element is neither conjugate to $x_1x_2x_3$ nor to $(x_1x_2x_3)^{-1}$. This completes the proof of the lemma.
\end{proof}

\subsection{Group laws on two letters are universal}
\label{sec:2_univ}
\begin{lemma}
Assume that $G$ is a finitely generated group that satisfies a group law. Then $G$ already satisfies a non-trivial group law on two letters.
\end{lemma}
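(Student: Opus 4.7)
The plan is to reduce the given $m$-variable law on $G$ to a two-variable law by substituting into $w$ along a free embedding $F_m \hookrightarrow F_2$. Let $w(x_1, \ldots, x_m) \in F_m$ be a non-trivial word with $w(\underline{g}) = 1$ for every $\underline{g} \in G^m$. Working in $F_2 = \langle a, b \rangle$, I will define the candidate two-variable law by $v(a, b) := w(a_1, \ldots, a_m)$, where $a_i := b^{i-1} a b^{-(i-1)}$ for $1 \le i \le m$, and then verify that $v \ne 1$ in $F_2$ while $v(g, h) = 1$ for all $g, h \in G$.

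The vanishing on $G$ is immediate: for $g, h \in G$ one has $v(g, h) = w(g, hgh^{-1}, h^2 g h^{-2}, \ldots, h^{m-1} g h^{-(m-1)})$, which is the value of $w$ on an $m$-tuple in $G^m$, hence trivial by hypothesis. The non-triviality of $v$ in $F_2$ reduces to the claim that $a_1, \ldots, a_m$ freely generate a free subgroup of $F_2$ of rank $m$: once this is known, a reduced non-trivial word in a free basis remains non-trivial, so $w \ne 1 \in F_m$ immediately yields $v = w(a_1, \ldots, a_m) \ne 1 \in F_2$.

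The free-generation claim is the only step that requires any input, and it is classical. The subgroup $N \lhd F_2$ generated by $\{ b^i a b^{-i} \}_{i \in \Z}$ is the normal closure of $a$, and since $F_2 / N \cong \Z$ is the infinite cyclic group generated by the image of $b$, applying Nielsen--Schreier with Schreier transversal $\{b^i : i \in \Z\}$ exhibits $\{b^i a b^{-i}\}_{i \in \Z}$ as a free basis of $N$; restricting to $0 \le i \le m-1$ then yields the desired rank-$m$ free subgroup. This is really the only (very mild) obstacle in the argument. I note in passing that finite generation of $G$ is not used anywhere in the plan, so the conclusion actually holds for an arbitrary group satisfying a group law.
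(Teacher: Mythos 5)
Your proof is correct. The overall strategy is the same as the paper's: produce a homomorphism $\phi: F_m \to F_2$ with $\phi(w) \ne 1$ and observe that $\phi(w)$ is then a two-letter law on $G$, since its values on $G^2$ are values of $w$ on $m$-tuples in $G^m$. Where you differ is in how the key step is justified. The paper treats the existence of such a $\phi$ as a black box, citing a corollary from the literature on limit groups; you instead exhibit an explicit $\phi$, namely the embedding $x_i \mapsto b^{i-1}ab^{-(i-1)}$, and verify injectivity by identifying $\{b^iab^{-i}\}_{i\in\Z}$ as a Schreier basis of the normal closure of $a$ (the transversal computation you sketch is right: the $b$-edges contribute trivial generators, the $a$-edges contribute exactly the conjugates $b^iab^{-i}$). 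Since any injective homomorphism certainly does not kill $w$, this makes the lemma self-contained and elementary, at the cost of a small amount of Reidemeister--Schreier bookkeeping; one could even shortcut that by a ping-pong argument or by quoting that the conjugates of $a$ by distinct powers of $b$ are Nielsen-reduced. Your closing remark is also accurate and consistent with the paper's proof: finite generation of $G$ plays no role in this lemma, so the statement holds for arbitrary groups satisfying a law.
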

\begin{proof}
Assume that $G$ satisfies a group law $w \in F_m$ for some $m > 2$. By \cite[Corollary 3.3]{BG:Limit} there is a homomorphism $\phi = (\phi_1,\phi_2, \ldots, \phi_n): F_m \arrow F_2$ such that $\phi(w) \ne 1$. Now $G$ satisfies the non-trivial group law $\phi(w)$ as
$$\phi(w)(g,h) = w(\phi_1(g,h),\phi_2(g,h), \ldots, \phi_n(g,h)) = 1, \quad \forall g,h \in G.$$
\end{proof}

\subsection{When $G$ satisfies a group law the action is not faithful}
\begin{proposition} \label{prop:if}
If $G$ is any finitely generated group which satisfies a group law, then for any $n>\max\{2,d(G)\}$, the action of $\Out(F_n)$ on $\bar{V}_n(G)$ is not faithful.
\end{proposition}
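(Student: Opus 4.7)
The plan is to exhibit an explicit non-trivial element $[\alpha] \in \Out(F_n)$ which acts as the identity on $\bar V_n(G)$. By the previous lemma (Section \ref{sec:2_univ}) we may assume that the group law on $G$ is a non-trivial word $w = w(y_1,y_2) \in F_2$, so $w(g,h) = 1$ for all $g,h \in G$. Since $n \ge 3$, the generators $x_{n-1}, x_n$ of $F_n$ are distinct from $x_1$, and the substitution $W := w(x_{n-1},x_n) \in F_n$ is non-trivial because $\langle x_{n-1}, x_n \rangle \le F_n$ is free of rank $2$.

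Take $\alpha \in \Aut(F_n)$ defined by $\alpha(x_1) = x_1 \cdot W$ and $\alpha(x_i) = x_i$ for $2 \le i \le n$. This is indeed an automorphism, inverted by the endomorphism sending $x_1 \mapsto x_1 \cdot W^{-1}$ and fixing the remaining generators. I would then verify two claims: that $\alpha$ acts as the identity on $\bar V_n(G)$, and that $[\alpha] \ne 1$ in $\Out(F_n)$.

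For the first claim, given any $\g = (g_1,\ldots,g_n) \in V_n(G)$ the precomposition $\alpha_{\g} \circ \alpha \colon F_n \to G$ sends $x_1$ to $g_1 \cdot w(g_{n-1}, g_n) = g_1 \cdot 1 = g_1$ (using the law) and $x_i$ to $g_i$ for $i \ge 2$. Thus $\alpha$ fixes every $\g \in V_n(G)$ pointwise, so a fortiori acts as the identity on $\bar V_n(G)$. For the second claim, if $\alpha$ were the inner automorphism given by conjugation by some $g \in F_n$, then $g$ would have to centralize each of $x_2, \ldots, x_n$; since centralizers of non-trivial elements in a free group are cyclic, $g \in \langle x_2 \rangle \cap \langle x_3 \rangle = \trivgp$, which would force $\alpha = \id$ and hence $W = 1$, contradicting the non-triviality of $w$ in $F_2$.

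The argument is direct and I do not foresee a genuinely hard step; the construction is really just the observation that having extra generators disjoint from $x_1$ allows one to plant a word that evaluates to the identity on $G$ while being a non-trivial modification in $F_n$. The role of the hypothesis $n > \max\{2, d(G)\}$ is transparent: $n \ge 3$ provides the two generators $x_{n-1}, x_n$ disjoint from $x_1$ in which to plant the law, while $n > d(G)$ merely ensures that $V_n(G)$ is non-empty so that the action is defined on a non-empty set.
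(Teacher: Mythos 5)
Your proposal is correct and follows essentially the same route as the paper: reduce to a two-letter law $w$ via the lemma of Section \ref{sec:2_univ}, then take the Nielsen-type automorphism that multiplies one basis element by $w$ evaluated on two other basis elements, which acts trivially on $V_n(G)$ because the law kills the correction term. The only (minor) difference is in verifying $[\alpha]\ne 1$ in $\Out(F_n)$: the paper invokes the fact that the two transvections generate a free subgroup of $\Out(F_n)$, whereas you give a more self-contained argument via centralizers in $F_n$ (using $n\ge 3$), which is perfectly valid.
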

\begin{proof}
As we saw in Section \ref{sec:2_univ} we may assume that $G$ satisfies a group law on two letters, i.e. there exists a word $w \in F_2$ such that $w(g,h) = 1, \ \ \forall g,h \in G$.

Consider the automorphism $\alpha = w(R_{n,1},R_{n,2})$ given explicitly by
\[
    \alpha(x_1,\dots,x_n) =
    (x_1,\dots,x_{n-1}, x_n \cdot w(x_1,x_2) ).
\]
$\alpha$ is not the trivial automorphism because, by assumption $w$ is a non-trivial word in $F_2$ and $R_{n,1}, R_{n,2}$ generate a free subgroup of $\Out(F_n)$. But by construction $\alpha$ acts trivially on $G^n$.
\end{proof}

\subsection{When the action is not faithful $G$ satisfies a group law}
The following completes the proof of our main theorem.
\begin{proposition}
Let $G$ be a Tarski monster and $n \ge 3$. Then the action of $\Out(F_n)$ on $\bar{V}_n(G)$ is faithful if and only if $G$ satisfies no group law.
\end{proposition}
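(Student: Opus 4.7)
Proposition \ref{prop:if} already establishes the easy direction: if $G$ satisfies a group law, then the action cannot be faithful. For the converse, assume $G$ is a Tarski monster satisfying no group law, and let $\alpha\in\Aut(F_n)$ represent an element of $\Out(F_n)$ acting trivially on $\bar{V}_n(G)$. My plan is to show that $\alpha$ also acts trivially on the redundant locus $\bar{R}_n(F_{n-1})$, whereupon Lemma \ref{lem:faithful_Rn} will force $\alpha$ to be inner and complete the proof.

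The key step will be a kernel preservation argument. Pick any $\phi\in R_n(F_{n-1})$ and any surjection $\psi\colon F_{n-1}\twoheadrightarrow G$. Then $\psi\phi\in V_n(G)$, so by hypothesis there exists $\sigma\in\Aut(G)$ with $\psi\phi\alpha=\sigma\psi\phi$, forcing $\alpha^{-1}(\ker(\psi\phi))=\ker(\psi\phi)$, i.e.\ $\alpha$ preserves the normal subgroup $\phi^{-1}(\ker\psi)\lhd F_n$. Applying the same hypothesis to $\alpha^{-1}$ (which clearly also acts trivially on $\bar V_n(G)$) shows this subgroup is $\alpha$-invariant in the opposite direction as well. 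Intersecting over all surjections $\psi$ yields that $\phi^{-1}\bigl(\bigcap_{\psi}\ker\psi\bigr)$ is preserved by both $\alpha$ and $\alpha^{-1}$. If one can establish that $\bigcap_{\psi}\ker\psi=\{1\}$, i.e.\ that $F_{n-1}$ is residually $G$ through surjective homomorphisms, then this intersection collapses to $\ker\phi$; hence $\ker(\phi\alpha)=\ker\phi$, and $\phi\alpha=\tau\phi$ for some $\tau\in\Aut(F_{n-1})$. Thus $[\phi\alpha]=[\phi]$ in $\bar{R}_n(F_{n-1})$, and since $\phi$ was arbitrary, Lemma \ref{lem:faithful_Rn} finishes the argument.

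The main obstacle, as usual, is establishing this residual property. My strategy is to reduce to the two generator case: by the Bridson-Grunewald result \cite{BG:Limit} recalled in Section \ref{sec:2_univ}, for any nontrivial $w\in F_{n-1}$ there is a homomorphism $\rho\colon F_{n-1}\to F_2$ with $u:=\rho(w)\neq 1$, so it suffices to exhibit a generating pair $(a,b)\in V_2(G)$ for which $u(a,b)\neq 1$; the composition of $\rho$ with evaluation at $(a,b)$ will then be a surjection $F_{n-1}\twoheadrightarrow G$ which does not kill $w$. To produce such a pair I invoke the Ol{\cprime}shanski{\u\i}-Zusmanovich characterization cited in the introduction: since $G$ has no group law, $G$ admits finite generating sets whose Cayley graphs have girth greater than $|u|$. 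Because the hypothesis also forces $G$ to be torsion free (Tarski monsters of finite exponent satisfy $x^p=1$), every nontrivial element of $G$ has cyclic centralizer, which is the unique maximal subgroup containing it; consequently any two elements of such a generating set that lie in distinct maximal subgroups already generate $G$, and such a pair must exist because the set as a whole generates the noncyclic group $G$. Restricting the girth bound from the full generating set to this pair can only increase the girth, so the resulting generating pair $(a,b)$ admits no relation of length at most $|u|$; in particular $u(a,b)\neq 1$, as required.
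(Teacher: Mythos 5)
Your argument is genuinely different from the one in the paper, and in outline it is sound. The paper proves the contrapositive: from a non-inner $\alpha$ acting trivially on $\bar{V}_n(G)$ it manufactures explicit two-letter words $w_i^u$ vanishing on all of $V_2(G)$, invokes Lemma \ref{lem:faithful_Rn} only in the rank-three case (the action on $\bar{R}_3(F_2)$) to show that one of them is nontrivial in $F_2$, and then upgrades a law on generating pairs to a genuine group law by a commutator trick. You instead run the implication forwards: triviality of $\alpha$ on $\bar{V}_n(G)$ forces $\alpha$ to preserve $\ker(\psi\phi)$ for every $\phi\in R_n(F_{n-1})$ and every epimorphism $\psi\colon F_{n-1}\twoheadrightarrow G$, and if these kernels intersect to $\ker\phi$ then $\alpha$ lies in the kernel of the action on $\bar{R}_n(F_{n-1})$, so Lemma \ref{lem:faithful_Rn} (in its full generality) finishes. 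This is arguably cleaner and avoids the case analysis with the $w_i^u$; the price is that all the content is concentrated in the claim that $F_{n-1}$ is residually $G$ \emph{through surjections}, and that is where your write-up has a gap.

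Concretely: you need $\psi=\mathrm{ev}_{(a,b)}\circ\rho$ to be surjective onto $G$, otherwise $\psi$ is not one of the maps you are allowed to intersect over (the hypothesis on $\alpha$ only applies to $\psi\phi\in V_n(G)$) and the element $w$ may fail to be separated. But the Bridson--Grunewald statement as recalled in Section \ref{sec:2_univ} supplies only a \emph{homomorphism} $\rho\colon F_{n-1}\to F_2$ with $\rho(w)\neq 1$; if $\rho$ is not onto, the elements $\rho(x_i)(a,b)$ could all lie in a single maximal cyclic subgroup of $G$, in which case $\psi$ has cyclic image. The step is repairable --- one can take $\rho$ to be a retraction of $F_{n-1}=\langle x_1,x_2\rangle*\langle x_3,\dots,x_{n-1}\rangle$ onto its first free factor, and such retractions already separate any fixed $w\neq 1$ from the identity (Baumslag's big-powers argument; equivalently, $F_{n-1}$ is fully residually $F_2$ via epimorphisms) --- but this is the load-bearing point and must be argued, not asserted. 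A smaller issue: your claim that a Tarski monster with no group law is torsion free is only justified in the finite-exponent case and is not obviously true in general; fortunately you do not need it, since for \emph{any} Tarski monster each nontrivial $g$ lies in a unique maximal (cyclic) subgroup --- two distinct maximal cyclic subgroups through $g$ would both centralize $g$ while generating $G$, forcing $g$ to be central, which contradicts simplicity --- so your generating pair $(a,b)$ exists regardless of torsion. The rest of the girth argument (a Cayley graph on a subset of a generating set has girth at least that of the full one, so the reduced word $u$ of length below the girth cannot die at $(a,b)$) is fine.
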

\begin{proof}
The only if part is treated in Proposition \ref{prop:if}.
Assume that $G$ is a Tarski monster group and that the action of $\Out(F_n)$ on $\bar{V}_n(G)$ is not faithful. We will show that $G$ satisfies a group law.

It is enough to show that there exists some non-trivial word $w=w(x,y) \in F_2$ such that $w(a,b)=1$ for any $(a,b) \in V_2(G)$. Indeed, assume that $w$ can be written in reduced form as $w=z_1 \dots z_n$,
where $z_1,\dots,z_n \in \{x^{\pm 1},y^{\pm 1}\}$, and let
\[
   v = wzw^{-1}z^{-1} = z_1 \dots z_n z z_n^{-1} \dots z_1^{-1} z^{-1},
\]
where $z \in  \{x^{\pm 1},y^{\pm 1}\}$ satisfies that $z \neq
z_1^{-1},z_n,z_n^{-1}$. Then $v=v(x,y)$ is a non-trivial word in
$F_2$. Moreover, if $(a,b) \in V_2(G)$ then $v(a,b)=1$ since $w(a,b)=1$. If $a$ and $b$ do not generate $G$, then they belong to the same cyclic group, and so there exist some $c \in G$ and $i,j \in \Z$ s.t. $a=c^i$ and $b=c^j$. In this case, $w(a,b)=w(c^i,c^j)=c^k$ for
some $k \in \Z$, and so $v(a,b)=v(c^i,c^j)=c^kc^lc^{-k}c^{-l}=1$ (where $l \in \{ \pm i, \pm j \}$). Hence, $G$ satisfies a group law with the word $v$.

By assumption, there exists some automorphism
$\alpha=(\alpha_1,\alpha_2,\ldots,\alpha_n) \in \Aut(F_n)$, which is not an inner automorphism, such that for any $\g \in V_n(G)$
there exists some $\sigma \in \Aut(G)$ such that.
\begin{align*}
\alpha_1(\g) &= \sigma(g_1), \\
\alpha_2(\g) &= \sigma(g_2), \\
& \vdots \\
\alpha_n(\g) &= \sigma(g_n).
\end{align*}

In particular for any $(a,b) \in V_2(G)$ and for any word $u \in F_2$, we can apply the above to the the $n$-tuple $(a,b,u(a,b), 1,1,\ldots) \in V_n(G)$ obtaining the equation
$$\alpha_3(a,b,u(a,b)) = \sigma(u(a,b)) = u(\sigma(a),\sigma(b)) = u\bigl(\alpha_1(a,b,u(a,b)), \alpha_2(a,b,u(a,b))\bigr).$$ Here we used $\alpha_1(a,b,u(a,b))$ as a short for $\alpha_1(a,b,u(a,b),1,1,\ldots)$.

Now consider the three words on two letters
\begin{eqnarray*}
    w_3^u(x,y) & = & u  \left( \alpha_1(x,y,u(x,y)), \alpha_2(x,y,u(x,y)) \right) \cdot  \alpha_3(x,y,u(x,y))^{-1} , \\
    w_2^u(x,y) & = & u  \left( \alpha_1(x,u(x,y),y), \alpha_3(x,u(x,y),y) \right) \cdot \alpha_2(x,u(x,y),y)^{-1} , \\
    w_1^u(x,y) & = & u  \left( \alpha_2(u(x,y),x,y), \alpha_3(u(x,y),x,y) \right) \cdot  \alpha_1(u(x,y),x,y)^{-1} .
\end{eqnarray*}
By permuting the role of $a,b,u(a,b)$ among the first three coordinates in the above argument we know that
$w_i^u(a,b) = 1 \quad \forall (a,b) \in V_2(G), \forall u \in F_2(x,y).$ If one of the $w_i^u$ is a non-trivial word in $F_2$ then we have our group law. But if $w_i^u$ represents the trivial word in $F_2$ for every $i = 1,2,3$ and every $u \in F_2$ then $\alpha=(\alpha_1,\alpha_2,\alpha_3)$ gives rise to an element of $\Out(F_3)$ in the kernel of the action on $\bar{R}_3(F_2)$ and hence a contradiction to Lemma \ref{lem:faithful_Rn}.

\end{proof}

\section{Final remarks and questions}
We conclude with a remark and a few open questions

\begin{remark}
The proof of Theorem \ref{thm:Main} may be somewhat simplified if we assume that $G$ is a Tarski monster with $\Out(G) = \trivgp$; Indeed given any countable group $H$, the existence of Tarski monsters with $\Out(G) = H$  is guaranteed by \cite{Obraztsov:Out_Tarski}. 
 \end{remark}
 
\subsection{Lower rank groups}
\begin{question}
What about $\Out(F_2)$ and $\Out(F_3)$? Do they admit a highly
transitive action on a set?
\end{question}
There is a chance that the action that we study in this paper, of
$\Out(F_n)$ on $\bar{V}_n(G)$ still has very good transitivity
properties for $n=3$. Even if this is true it seems that the proof
would be much harder as it would require a much better understanding of the Tarski monster $G$ and its automorphisms. A topic that we carefully avoided in this paper. The main obstacle is to find Tarski monster groups that satisfy some $3 \times 3$ analogue for condition \ref{itm:config} of Lemma \ref{lem:tech_3_gen}.

On the other hand it is plausible that one can construct completely different actions in this lower rank setting. For example since $\Out(F_2) = \PSL_2(\Z) = \Z/2\Z * \Z/3\Z$, there is a good chance that one can construct a highly transitive action for this group using the methods of \cite{Dixon:free_HT}. It is important to note though that this is not automatic. It is not in general true that if a finite index subgroup admits a highly transitive action then so does the group itself. Even though the other direction is true of course.

As for $n = 3$. At least for the group $\Aut(F_3)$ it follows from \cite[Corollary 1.2]{GL:lin_rep_Aut_Fn} that there is a finite index subgroup $\Delta < \Aut(F_3)$ that maps onto a finitely generated free group, and hence admits a (non-faithful) highly transitive action. Again it is plausible that one might be able to construct a highly transitive non-faithful action of $\Aut(F_3)$ from this.

\subsection{Mapping class groups}
\begin{question}
Does $\operatorname{Mod}(S)$, the mapping class group of a closed orientable surface $S$ of a high enough genus admit a highly transitive action on a set? If not, how about $k$-transitive actions for various values of $k$?
\end{question}
Recall that from \cite{GG:Primitive} it follows that these groups do admit faithful primitive actions.

\subsection{Faithfulness}
Our proof of the faithfulness statement in section \ref{sec:faithful} gives rise to the following general theorem:
\begin{theorem}
For any finitely generated group $G$, the following are equivalent:
\begin{itemize}
\item The action of $\Out(F_n)$ on the $\Aut(G)$-classes of $\Hom(F_n,G)$ is faithful for all large enough $n$.
\item The group $G$ does not satisfy a group law.
\end{itemize}
\end{theorem}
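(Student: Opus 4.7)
I would prove both directions of the equivalence separately.

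\emph{If $G$ satisfies a group law, then the action is not faithful for all large $n$:} This is Proposition~\ref{prop:if} applied directly. By the reduction of Section~\ref{sec:2_univ}, a law in $G$ gives a non-trivial law $w \in F_2$. Then for any $n > \max\{2, d(G)\}$, the element $\alpha = w(R_{n,1}, R_{n,2}) \in \Aut(F_n)$ is non-inner (because $R_{n,1}, R_{n,2}$ generate a free subgroup of $\Out(F_n)$) yet acts as the identity on $\Hom(F_n, G) = G^n$, so the action is non-faithful for every sufficiently large $n$.

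\emph{If $G$ has no group law, then the action is faithful for all $n \ge 3$:} The plan is to show that any kernel element acts trivially on $\bar{R}_n(F_{n-1})$, and then invoke Lemma~\ref{lem:faithful_Rn}. Suppose $\alpha \in \Aut(F_n)$ represents a kernel element. Then for every $\phi \in \Hom(F_n, G)$ there is $\sigma_\phi \in \Aut(G)$ with $\phi \circ \alpha = \sigma_\phi \circ \phi$, which yields
\[
\alpha^{-1}(\ker \phi) \;=\; \ker(\phi \circ \alpha) \;=\; \ker \phi,
\]
so $\alpha$ preserves $\ker \phi$ for every homomorphism $\phi : F_n \to G$.

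I would then upgrade this to \emph{every} $\phi \in R_n(F_{n-1})$ via a residual argument. Since $G$ has no law, the two-letter universality of Section~\ref{sec:2_univ} implies that $F_m$ is residually-$G$ for every $m$; in particular $\bigcap_{\theta \in \Hom(F_{n-1},G)} \ker \theta = \{1\}$ in $F_{n-1}$. Hence for any $\phi : F_n \to F_{n-1}$ in $R_n(F_{n-1})$,
\[
\ker \phi \;=\; \phi^{-1}\!\Bigl(\bigcap_\theta \ker \theta\Bigr) \;=\; \bigcap_{\theta} \ker(\theta \circ \phi).
\]
Since each $\theta \circ \phi$ lies in $\Hom(F_n, G)$, $\alpha$ preserves each $\ker(\theta \circ \phi)$ and therefore also their intersection $\ker \phi$. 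This means $\phi \circ \alpha = \tau \circ \phi$ for some $\tau \in \Aut(F_{n-1})$, so $[\phi \circ \alpha] = [\phi]$ in $\bar{R}_n(F_{n-1})$. Thus $\alpha$ acts trivially on $\bar{R}_n(F_{n-1})$, and Lemma~\ref{lem:faithful_Rn} forces $\alpha$ to be inner in $\Aut(F_n)$.

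The main step requiring care is the residual argument: using the Section~\ref{sec:2_univ} 2-letter reduction to translate "no law" into "$F_{n-1}$ is residually-$G$", and then pulling the kernels of the various $\theta : F_{n-1} \to G$ through $\phi$ to express $\ker \phi$ as an intersection of kernels of maps into $G$. Once this is set up, invoking Lemma~\ref{lem:faithful_Rn} is immediate, and the argument works uniformly for every $n \ge 3$, which is strictly stronger than the "for all large enough $n$" stated in the theorem.
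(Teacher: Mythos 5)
Your proof is correct, and for the substantive direction it takes a genuinely different route from the paper's. The paper obtains the general theorem by adapting the argument it gives for Tarski monsters: for a kernel element $\alpha$ one evaluates on tuples of the form $(a,b,u(a,b),1,\dots,1)$ and builds the explicit words $w_i^u$, concluding that either some $w_i^u$ is a non-trivial law or $\alpha$ dies on $\bar{R}_3(F_2)$, whence Lemma~\ref{lem:faithful_Rn} applies. You instead observe that ``$G$ satisfies no law'' is literally the statement that $F_{n-1}$ is residually $G$ (the detour through the two-letter reduction of Section~\ref{sec:2_univ} is not even needed for this --- the definition of ``no law'' already quantifies over all $m$), and then express $\ker\phi = \bigcap_\theta \ker(\theta\circ\phi)$ for $\phi \in R_n(F_{n-1})$ to deduce that $\alpha$ preserves $\ker\phi$ and hence fixes $[\phi]$ in $\bar{R}_n(F_{n-1})$; Lemma~\ref{lem:faithful_Rn} then forces $\alpha$ to be inner. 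This is cleaner and avoids all word manipulation; note that it uses in an essential way that the action is on all of $\Hom(F_n,G)$ rather than on $V_n(G)$ (the homomorphisms $\theta\circ\phi$ are typically non-surjective), which is consistent with the paper leaving the $\bar{V}_n(G)$ version as an open question. Your argument also yields faithfulness uniformly for all $n\ge 3$, slightly sharper than ``for all large enough $n$.'' The other direction is, as you say, exactly Proposition~\ref{prop:if}.
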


It is a very natural question if the above still holds if one
replaces $\Hom(F_n, G)$ by $\bar{V}_n(G)$.


\bibliography{../tex_utils/yair}

\def\cprime{$'$} \def\cprime{$'$} \def\cprime{$'$} \def\cprime{$'$}
  \def\cprime{$'$} \def\cprime{$'$} \def\cprime{$'$} \def\cprime{$'$}
\begin{thebibliography}{BdlHV08}

\bibitem[BdlHV08]{BHP:Kazhdan_T_book}
Bachir Bekka, Pierre de~la Harpe, and Alain Valette.
\newblock {\em Kazhdan's property ({T})}, volume~11 of {\em New Mathematical
  Monographs}.
\newblock Cambridge University Press, Cambridge, 2008.

\bibitem[BG09]{BG:Limit}
Jonathan Barlev and Tsachik Gelander.
\newblock Compactifications and algebraic completions of limit groups.
\newblock arXiv:0904.3771, 2009.

\bibitem[Dix90]{Dixon:free_HT}
John~D. Dixon.
\newblock Most finitely generated permutation groups are free.
\newblock {\em Bull. London Math. Soc.}, 22(3):222--226, 1990.

\bibitem[Eva93]{Evans:Wiegold_2}
Martin~J. Evans.
\newblock {$T$}-systems of certain finite simple groups.
\newblock {\em Math. Proc. Cambridge Philos. Soc.}, 113(1):9--22, 1993.

\bibitem[GG08]{GG:Primitive}
Tsachik Gelander and Yair Glasner.
\newblock Countable primitive groups.
\newblock {\em Geom. Funct. Anal.}, 17(5):1479--1523, 2008.

\bibitem[Gil77]{Gilman:SL2p}
Robert Gilman.
\newblock Finite quotients of the automorphism group of a free group.
\newblock {\em Canad. J. Math.}, 29(3):541--551, 1977.

\bibitem[GL09]{GL:lin_rep_Aut_Fn}
Fritz Grunewald and Alexander Lubotzky.
\newblock Linear representations of the automorphism group of a free group.
\newblock {\em Geom. Funct. Anal.}, 18(5):1564--1608, 2009.

\bibitem[Hal36]{Hall:gen}
Phillips Hall.
\newblock The {E}ulerian function of a group.
\newblock {\em Quart. J. Math}, 7:134--151, 1936.

\bibitem[Iva92]{Ivanov:MCG}
Nikolai~V. Ivanov.
\newblock {\em Subgroups of {T}eichm\"uller modular groups}, volume 115 of {\em
  Translations of Mathematical Monographs}.
\newblock American Mathematical Society, Providence, RI, 1992.
\newblock Translated from the Russian by E. J. F. Primrose and revised by the
  author.

\bibitem[Kit09]{Kitroser:HT}
Daniel Kitroser.
\newblock Highly transitive actions of surface groups.
\newblock arXiv:0911.2408, submitted to Proc. AMS, 2009.

\bibitem[KL90]{KL:prob_gen}
William~M. Kantor and Alexander Lubotzky.
\newblock The probability of generating a finite classical group.
\newblock {\em Geom. Dedicata}, 36(1):67--87, 1990.

\bibitem[Lub]{Lub:Aut_Fn}
Alex Lubotzky.
\newblock Dynamics of $\operatorname{Aut}(\rm{F}_n)$-actions on group
  presentations and representations.
\newblock In preparation.

\bibitem[McD77]{MD_free_HT}
T.~P. McDonough.
\newblock A permutation representation of a free group.
\newblock {\em Quart. J. Math. Oxford Ser. (2)}, 28(111):353--356, 1977.

\bibitem[MKS76]{MKS:Combinatorial_group_theory}
Wilhelm Magnus, Abraham Karrass, and Donald Solitar.
\newblock {\em Combinatorial group theory}.
\newblock Dover Publications Inc., New York, revised edition, 1976.
\newblock Presentations of groups in terms of generators and relations.

\bibitem[MS79]{MS:first}
G.~A. Margulis and G.~A. So{\u\i}fer.
\newblock Nonfree maximal subgroups of infinite index of the group {${\rm
  SL}\sb{n}({\bf Z})$}.
\newblock {\em Uspekhi Mat. Nauk}, 34(4(208)):203--204, 1979.

\bibitem[MS81a]{MS:Maximal}
G.~A. Margulis and G.~A. So{\u\i}fer.
\newblock Maximal subgroups of infinite index in finitely generated linear
  groups.
\newblock {\em J. Algebra}, 69(1):1--23, 1981.

\bibitem[MS81b]{MS:Maximal_Subgroups}
G.~A. Margulis and G.~A. So\u{\i}fer.
\newblock Maximal subgroups of infinite index in finitely generated linear
  groups.
\newblock {\em J. Algebra}, 69(1):1--23, 1981.

\bibitem[Obr96]{Obraztsov:Out_Tarski}
Viatcheslav~N. Obraztsov.
\newblock A new embedding scheme for groups and some applications.
\newblock {\em J. Austral. Math. Soc. Ser. A}, 61(2):267--288, 1996.

\bibitem[Ol{\cprime}80]{Ol:Tarski_Monsters}
A.~Ju. Ol{\cprime}{\v{s}}anski{\u\i}.
\newblock An infinite group with subgroups of prime orders.
\newblock {\em Izv. Akad. Nauk SSSR Ser. Mat.}, 44(2):309--321, 479, 1980.

\bibitem[Ol{\cprime}93]{OL:residualing}
A.~Yu. Ol{\cprime}shanski{\u\i}.
\newblock On residualing homomorphisms and {$G$}-subgroups of hyperbolic
  groups.
\newblock {\em Internat. J. Algebra Comput.}, 3(4):365--409, 1993.

\bibitem[OOS09]{OOS:Lacunary_H_groups}
Alexander~Yu. Ol{\cprime}shanskii, Denis~V. Osin, and Mark~V. Sapir.
\newblock Lacunary hyperbolic groups.
\newblock {\em Geom. Topol.}, 13(4):2051--2140, 2009.
\newblock With an appendix by Michael Kapovich and Bruce Kleiner.

\bibitem[Ser80]{Serre:Trees}
Jean-Pierre Serre.
\newblock {\em Trees}.
\newblock Springer-Verlag, Berlin, 1980.
\newblock Translated from the French by John Stillwell.

\bibitem[Ven87]{Venkataramana:LC_completions}
T.~N. Venkataramana.
\newblock Zariski dense subgroups of arithmetic groups.
\newblock {\em J. Algebra}, 108(2):325--339, 1987.

\bibitem[Wie88]{Wiegold:square}
James Wiegold.
\newblock Is the direct square of every {$2$}-generator simple group
  {$2$}-generator?
\newblock {\em Publ. Math. Debrecen}, 35(3-4):207--209 (1989), 1988.

\bibitem[WW78]{WW:Growth_seq}
James Wiegold and John~S. Wilson.
\newblock Growth sequences of finitely generated groups.
\newblock {\em Arch. Math. (Basel)}, 30(4):337--343, 1978.

\bibitem[ZM08]{Zimmer_Morris:book}
Robert~J. Zimmer and Dave~Witte Morris.
\newblock {\em Ergodic theory, groups, and geometry}, volume 109 of {\em CBMS
  Regional Conference Series in Mathematics}.
\newblock Published for the Conference Board of the Mathematical Sciences,
  Washington, DC, 2008.

\bibitem[Zus10]{Zus:Tarski_law}
Pasha Zusmanovich.
\newblock On the utility of robinson-amitsur ultrafilters.
\newblock arXiv:0911.5414, 2010.

\end{thebibliography}
\medskip

\end{document}